\documentclass[reqno,12pt]{amsart}

\usepackage[margin=1in]{geometry}
\usepackage{amsmath,amssymb,amsthm}
\usepackage[hidelinks]{hyperref}
\usepackage{microtype}

	\newcommand{\eps}{\varepsilon}
\newcommand{\R}{\mathbb R}

	\newcounter{dummy} \numberwithin{dummy}{section}

\newtheorem{corollary}[dummy]{Corollary}
\newtheorem{theorem}[dummy]{Theorem}
\newtheorem{lemma}[dummy]{Lemma}

\newtheorem{problem}[dummy]{Problem}

\title[Section functions]{On determination of convex bodies by their section functions}

\author{Alexander Fish, Dmitry Ryabogin, Wen Rui Sun, and Vladyslav Yaskin}

\address{School of Mathematics and Statistics, University of Sydney,  NSW 2006, Australia}
\email{alexander.fish@sydney.edu.au}

\address{Department of Mathematical Sciences, Kent State University, Kent, OH 44242,
USA}
\email{ryabogin@math.kent.edu}

\address{Department of Mathematical and Statistical Sciences, University of Alberta, Edmonton, Alberta, T6G 2G1, Canada}
\email{wrsun@ualberta.ca}
\address{Department of Mathematical and Statistical Sciences, University of Alberta, Edmonton, Alberta, T6G 2G1, Canada}
\email{yaskin@ualberta.ca}

\subjclass[2020]{Primary 52A20, 52A38.}
\keywords{Convex body, section, Fourier transform, spherical harmonic}
\date{}

\thanks{The first author was supported by the ARC DP240100472 Grant. The second author was supported by the US NSF Grants DMS-2247771 and DMS-2604412. The third and fourth authors were supported by an NSERC Discovery Grant.}

\begin{document}

\begin{abstract} For a convex body $K$ its parallel section function in the direction $\theta\in S^{n-1}$ is defined by 
	 $$A_{K,\theta}(t) = \mathrm{vol}_{n-1} \left( K \cap \{ x \in \mathbb{R}^n ~ | ~ \langle x, \theta \rangle = t \} \right) .$$
	 We study to what extent the body $K$ is determined by partial information about these functions. As applications, we characterize convex bodies with locally separable
	 section functions, establish partial results on the homothety
	 conjecture for bodies of flotation, and answer a question of Barker
	 and Larman concerning the determination of convex bodies from section
	 functions at infinitely many distances from the origin.

\end{abstract}

\maketitle

\section{Introduction}

Geometric tomography is an area of mathematics that deals with the retrieval of information
about solids from lower-dimensional data, such as the areas of sections or projections.
 The book ``Geometric Tomography" by R.~J.~Gardner \cite{g} gives a comprehensive account of known results, techniques and open problems in the area. 

Throughout the paper, \(S^{n-1}\) denotes the unit sphere in
\(\mathbb R^n\), equipped with the standard inner product
\(\langle\cdot,\cdot\rangle\), and
\[
\xi^\perp=\{x\in\mathbb R^n:\langle x,\xi\rangle=0\}.
\]
We denote by \(B_2^n\) the Euclidean unit ball in \(\mathbb R^n\).
A {\it convex body} is a compact convex subset of \(\mathbb R^n\) with non-empty
interior.

For a convex body \(K\subset\mathbb R^n\), its {\it parallel section function}
in the direction \(\theta\in S^{n-1}\) is defined by
\[
A_{K,\theta}(t)
=
\operatorname{vol}_{n-1}
\bigl(
K\cap\{x\in\mathbb R^n:\langle x,\theta\rangle=t\}
\bigr).
\]
A basic question in geometric tomography is to determine how much
information about \(K\) is encoded by these functions.

A classical result of Minkowski and Funk states that if \(K\) and \(L\)
are origin-symmetric convex bodies in \(\mathbb R^n\) such that
\[
\operatorname{vol}_{n-1}(K\cap\xi^\perp)
=
\operatorname{vol}_{n-1}(L\cap\xi^\perp)
\]
for every \(\xi\in S^{n-1}\), then \(K=L\); see
\cite[Corollary~7.2.7]{g}. Thus an origin-symmetric convex body is
uniquely determined by the values of 
\(
A_{K,\theta}(0),\) \( \theta\in S^{n-1}.
\)

For non-central sections much less is known. The following problem, posed
by Barker and Larman \cite{BL}, remains open even in the plane.

\begin{problem}
	Let \(K\) and \(L\) be convex bodies in \(\mathbb R^n\) such that
	\[
	tB_2^n\subset \operatorname{int}K\cap\operatorname{int}L
	\]
	for some \(t>0\). Suppose that
\begin{equation}\label{noncentral}
	\mathrm{vol}_{n-1}(K\cap H) = \mathrm{vol}_{n-1}(L\cap H)
\end{equation}
	for every hyperplane \(H\) tangent to \(tB_2^n\). Is then \(K=L\)?
\end{problem}

Note that condition \eqref{noncentral} is equivalent to 
\[
A_{K,\theta}(t)=A_{L,\theta}(t),
\qquad \theta\in S^{n-1}.
\]
An analogue of this problem on the sphere was considered earlier by
Santal\'o \cite{S}, and some partial results were obtained in \cite{H}, \cite{HR}, \cite{XMC}, \cite{Y}, \cite{YZh}.
Since the problem remains open for a single value of \(t\), it
is natural to ask what happens if equality of the section functions is
known for all \(t\) in some set $A$.

Barker and Larman proved that in odd dimensions equality of the parallel
section functions for infinitely many values of \(t\) implies \(K=L\).
They also asked whether the corresponding result holds in even
dimensions. In Section~6 we answer this question affirmatively.  

A key ingredient in this and  other results of the paper is an
analyticity property of the spherical harmonic components of the
parallel section function.  Namely, if $K$ is a convex body containing the origin in its interior, then for every spherical harmonic \(H\),
the function
\[
t\longmapsto
\int_{S^{n-1}}A_{K,\theta}(t)H(\theta)\,d\theta
\]
is real analytic on  $(-a,a)$, where \(a=\min_{x\in \partial K}|x|\).   When \(n\) is odd, this function
is in fact a polynomial on this interval. 

Another direction of research concerns characterizations of ellipsoids
in terms of their parallel section functions. Motivated by Arnold's
problem on algebraically integrable bodies (see Problems 1987-14, 1988-13, and 1990-27 in \cite{Ar}), Agranovsky \cite{A} introduced the following notion. A convex body \(K\subset\mathbb R^n\) is called
{\it polynomially integrable} if, for every \(\theta\in S^{n-1}\),
\[
A_{K,\theta}(t)
=
\sum_{m=0}^N a_m(\theta)t^m
\]
for some non-negative integer \(N\), some functions $a_m$ on the sphere,  and for all \(t\) for which the
section
\(
K\cap\{x\in\mathbb R^n:\langle x,\theta\rangle=t\}
\)
is non-empty.

Agranovsky proved that infinitely smooth polynomially integrable convex bodies do not exist in
even dimensions and conjectured that in odd dimensions they should be
ellipsoids. The latter conjecture was confirmed in
\cite{kmy}. In fact, a local version is sufficient: if
$$A_{K,\theta} (t)=\sum_{m=0}^N a_m(\theta) t^m,$$ for  all $\theta\in S^{n-1}$ and all $t$ from some small interval containing the origin, then $K$ is an ellipsoid in odd dimensions.

A natural next step is to consider convex bodies whose parallel section
functions admit a  representation of the form
\begin{equation}\label{def:separable}
A_{K,\theta}(t)
=
\sum_{m=0}^{N} a_m(\theta)b_m(t).
\end{equation}
Let $K\subset \R^n$ be a convex body containing the origin in its interior.
We say that \(K\) has a {\it locally separable section function near the
origin} if there exists \(\varepsilon>0\) such that
\eqref{def:separable} holds for every \(\theta\in S^{n-1}\) and every
\(t\in(-\varepsilon,\varepsilon)\). We say that \(K\) has a {\it locally separable section function near the boundary} if there
exists \(\varepsilon>0\) such that \eqref{def:separable} holds for every
\(\theta\in S^{n-1}\) and every
\[
t\in\bigl(h_K(\theta)-\varepsilon,h_K(\theta)\bigr),
\]
where $h_K$ is the support function of $K$.

These concepts are closely related to the notion of separably integrable bodies introduced by Yaskin and Zawalski \cite{yz}. 
Following  \cite{yz}, we say that a convex body $K$ is {\it locally 1-separably integrable} if $$V_{K,\theta}(t) = \int_{-t}^t A_{K,\theta} (s) \, ds =\sum_{m=0}^N a_m(\theta) b_m(t),$$ 
for all $\theta\in S^{n-1}$ and all $t$ in a small interval containing zero.

Let $K$ be an origin-symmetric convex body in $\mathbb R^n$ (i.e., $K=-K$) with infinitely smooth boundary.  It was shown in \cite{yz} that if $K$ is locally 1-separably integrable, then $K$ is an ellipsoid if $n$ is odd, and $K$ is a Euclidean ball if $n$ is even.

Note that a body with a locally separable section function near the origin is necessarily locally 1-separably integrable. For origin-symmetric convex bodies, these two notions coincide.

 In Section \ref{Section-polynomial-separable}, we  show that if a  convex body $K \subset \mathbb R^n$ (not necessarily origin-symmetric or smooth) has a locally separable section function near the origin, then $K$ is an ellipsoid if $n$ is odd, and $K$ is a centered Euclidean ball if $n$ is even. We will also obtain the same conclusion for infinitely smooth convex bodies in even dimensions with locally separable section functions near the boundary.
Our results do not cover the case when $n$ is odd and the body has a separable section function near the boundary. In fact, a version of this question for polynomially integrable bodies, posed by Agranovsky in \cite{A2}, is also open.

We finally discuss the homothety conjecture. Let $K$ be a convex body in $\mathbb R^n$ and let $\delta$ be a real number in $(0,\mathrm{vol}_n(K)/2)$. For each $\theta\in S^{n-1}$ and $t\in \mathbb R$, define
$$W^+(\theta,t)= \{x\in\mathbb R^n: \langle x,\theta\rangle\ge t\},\qquad \mathrm{and} \qquad W^-(\theta,t)= \{x\in\mathbb R^n: \langle x,\theta\rangle\le t\}.$$ 
For each $\theta$ there is a unique $t(\theta)$ such that 
$$\mathrm{vol}_n(K\cap W^+(\theta,t(\theta)))=\delta.$$
The {convex body of flotation} (also known as the convex floating body), is defined by 
$$K_\delta = \bigcap_{\theta\in S^{n-1}} W^-(\theta,t(\theta)).$$
This concept was introduced in the works of B\'ar\'any and Larman \cite{BarL}, and Sch\"utt and Werner \cite{SW}. 

The homothety conjecture, proposed by Sch\"utt and Werner \cite{SW2}, says that if a convex body in $\mathbb R^n$ is homothetic to one  of its convex bodies of flotation, then it is an ellipsoid. For partial results, the reader is referred to  \cite{ANRSY}, \cite{St}, \cite{WY}.

We consider the following version of this problem. Let $K$ be a  convex body  in $\mathbb R^n$ that is homothetic to $K_\delta$ for each $\delta$ from some interval $(\delta_1,\delta_2)\subset (0, \mathrm{vol}_n(K)/2)$. Does this imply that $K$ is an ellipsoid?  In Section \ref{Section-floating}, we  show that if $K\subset \mathbb R^2$ is a convex body containing the origin in its interior such that	$K_\delta=\lambda(\delta)K$ for every $\delta\in (\delta_1,\delta_2)$, then $K$ is an ellipse.  Note that for a single $\delta$ this result is not true, namely,  there exists a non-symmetric convex body  $K\subset \mathbb R^2$  and $\delta>0$ such that  $K_\delta=\lambda(\delta)K$; see \cite{ANRSY}. 
In dimensions $n\ge 3$, we show that an origin-symmetric infinitely smooth convex body $K$ is an ellipsoid under the  assumption that $K_\delta=\lambda(\delta)K$ for all  $\delta\in (\delta_1, \mathrm{vol}_n(K)/2)$.

\section{Preliminaries}

 We say that a compact set $K \subset \mathbb{R}^n$ is a {\it star body} if for every point $x \in K$, each point of the interval $[0, x)$ lies in the interior of $K$, and the {\it Minkowski functional} of $K$,  defined by
 $$
 \|x\|_K = \min\{\lambda \geq 0:  x \in \lambda K \},
 $$
  is a continuous function on $\mathbb{R}^n$.
 
 The {\it radial function} of a star body $K$ is given by
 $$
 \rho_K(\theta) = \max \{\lambda > 0: \lambda \theta \in K \}, \quad \theta \in S^{n-1}.
 $$
 It is clear that $\rho_K(\theta) = \|\theta\|_K^{-1}$ for any $\theta \in S^{n-1}$, and $\rho_K$ is positive and continuous on $S^{n-1}$. Geometrically, $\rho_K(\theta)$ is the distance from the origin to the boundary of $K$ in the direction $\theta\in S^{n-1}$.

 Let $K\subset \mathbb R^n$ be a convex body.
The {\it support function} of $K$ is 
\begin{align*}
    h_K(x) = \max_{y \in K} \langle y, x \rangle , \qquad x \in \mathbb{R}^n.
\end{align*}
If $K$ contains the origin in its interior, then $h_K$ is the Minkowski functional of $K^\circ$, the polar body of $K$:
\begin{align*}
     h_K(x) =  \left\|x\right\|_{K^\circ},  \qquad x \in \mathbb{R}^n.
\end{align*}

We say that a convex body \(K\subset\mathbb R^n\) is
\emph{infinitely smooth} if its boundary \(\partial K\) is a
\(C^\infty\) hypersurface. If $K$ contains the origin it its interior, this  is
equivalent to
\(
\|\cdot\|_K\in C^\infty(S^{n-1})
\).
 In particular, if $K$ is infinitely smooth, then $A_{K,\theta}(t)$ is infinitely differentiable in $t$ on the interval $(-h_K(-\theta),h_K(\theta))$, for each $\theta\in S^{n-1}$; see \cite[Lemma 2.4]{K-book}.

A spherical harmonic of degree $m$ in dimension $n$ is defined as the restriction to the unit sphere $S^{n-1}$ of a homogeneous harmonic polynomial of degree $m$ in $n$ variables. We denote by $\mathcal{H}_m^n$ the finite-dimensional space of all spherical harmonics of degree $m \ge 0$ on $S^{n-1}$.
Spherical harmonics of different degrees are orthogonal, and every function $f\in L^2(S^{n-1})$ has a unique expansion 
$$f=\sum_{m=0}^\infty H_m,$$
where $H_m\in \mathcal{H}_m^n$, and the series is convergent in $L^2(S^{n-1})$.

We will often make use of the following Funk-Hecke formula; see \cite[Theorem 3.4.1]{Gro}.    If $f$ is a function of a single variable, such that $f(z)(1 - z^2)^{\frac{n-3}{2}}$ is integrable on $[-1,1]$, and $H \in \mathcal{H}_m^n$, then  
    \begin{align}\label{FH}
        \int_{S^{n-1}} f( \langle u ,v  \rangle) H(v)dv = \lambda_m H(u), \qquad u\in S^{n-1},
    \end{align}
    with  
    \begin{align*}
        \lambda_m=\left | S^{n-2}\right| \int_{-1}^1  f(z) P_m^n(z) (1 - z^2)^{\frac{n-3}{2}} dz,
    \end{align*}
    where $\left | S^{n-2}\right|$ is  the surface area of the sphere $S^{n-2}$, and  $P_m^n $ is the Legendre polynomial of dimension $n$ and degree $m$. It is known (see \cite[Proposition 3.3.7]{Gro}) that $P_m^n $ contains only odd powers if $m$ is odd, and only even powers if $m$ is even.

    In \eqref{FH} and throughout the paper, integration over the sphere is with respect to the rotation-invariant surface measure on $S^{n-1}$.
 
Let  $\mathcal{S}(\mathbb{R}^n)$ be the Schwartz space of complex-valued infinitely differentiable
 rapidly decreasing functions on $\mathbb{R}^n$. Elements of \(\mathcal{S}(\mathbb{R}^n)\) will be
referred to as \emph{test functions}. The space
\(\mathcal{S}'(\mathbb{R}^n)\) of \emph{tempered distributions} consists
of all continuous linear functionals on \(\mathcal{S}(\mathbb{R}^n)\).
We write
\[
\langle f,\varphi\rangle
\]
for the action of \(f\in\mathcal{S}'(\mathbb{R}^n)\) on
\(\varphi\in\mathcal{S}(\mathbb{R}^n)\).

For \(\varphi\in\mathcal{S}(\mathbb{R}^n)\), its Fourier transform is
defined by
\[
\widehat{\varphi}(x)
=
\int_{\mathbb{R}^n}
\varphi(y)e^{-i\langle x,y\rangle}\,dy.
\]
It is well known that
\(\widehat{\varphi}\in\mathcal{S}(\mathbb{R}^n)\).
 
 The Fourier transform of a  distribution $f\in  \mathcal S'(\mathbb R^n)$ is the distribution  \(\widehat f\), also denoted by \(f^\wedge\),
defined by
\begin{equation}\label{action}
	\langle \widehat{f}, \varphi\rangle = \langle {f}, \widehat{\varphi}\rangle,
\end{equation}
 for every $\varphi \in \mathcal S(\mathbb R^n)$.

For a distribution \(f\), we denote by \(f(-x)\) its reflection,
defined by
\[
\langle f(-x),\varphi(x)\rangle
=
\langle f(x),\varphi(-x)\rangle
\]
for every test function \(\varphi\).
A distribution \(f\) is called \textit{even} if \(f(x)=f(-x)\).

With our convention for the Fourier transform, every tempered
distribution \(f\) satisfies
\[
\widehat{\widehat f\,\,}(x)=(2\pi)^n f(-x).
\]
In particular, if \(f\) is even, then
\[
\widehat{\widehat f\,\,}=(2\pi)^n f.
\]

 Let \(f\in C^\infty(S^{n-1})\) and \(p\in\mathbb{C}\). Consider the
homogeneous extension of \(f\) to
\(\mathbb{R}^n\setminus\{0\}\) of degree \(-n+p\), defined by
\[
f_p(x)
=
|x|^{-n+p}
f\left(\frac{x}{|x|}\right),
\]
where $|\cdot|$ stands for the Euclidean norm on $\mathbb R^n$.

If \(\Re p>0\), then \(f_p\) is locally integrable and defines a 
distribution by
\[
\langle f_p,\varphi\rangle
=
\int_{\mathbb{R}^n}f_p(x)\varphi(x)\,dx.
\]
The Fourier transform \(\widehat{f_p}\), understood in the sense of
distributions, agrees on \(\mathbb{R}^n\setminus\{0\}\) with a
\(C^\infty\) homogeneous function of degree \(-p\); see
\cite[Lemma~3.16]{K-book} or \cite{GYY}.

 If $\Re p<0$, then $f_p$ no longer acts on test functions by integration, however one can define $\langle f_p,\varphi\rangle$ using a regularization procedure.  If \(\Re p<1\), $p$ is not an integer, and \(f\) is
 continuous, then the restriction of the Fourier transform of \(f_p\) to
\(S^{n-1}\) is given by
	\begin{align}\label{eq:GYY}
	\widehat f_p(\theta)
	&=
	\Gamma(p)\cos\frac{\pi p}{2}
	\int_{S^{n-1}}
	|\langle\theta,u\rangle|^{-p} f(u)\,du
	\notag\\
	&\quad-
	i\Gamma(p)\sin\frac{ \pi p }{2}
	\int_{S^{n-1}}
	\operatorname{sgn}\langle\theta,u\rangle
	|\langle\theta,u\rangle|^{-p} f(u)\,du, \qquad \theta\in S^{n-1};
\end{align}
 this follows from \cite[equation~(3.3)]{GYY}  by analytic
continuation. Note that even though \cite[equation~(3.3)]{GYY} is stated for infinitely smooth functions, it immediately extends to continuous functions by approximation.

If $f$ is even and continuous and $p$ is a negative odd integer, then
 \begin{equation}\label{p-odd-negative}
	\widehat{f_p} (\theta) = (-1)^{(p-1)/2} \frac{\pi}{2(-p)!}\int_{S^{n-1}} |\langle \theta, u\rangle|^{-p} f(u)\, du, \qquad \theta\in S^{n-1};
\end{equation} 
 see   \cite[Corollary 3.15]{K-book} or \cite[Equation (2.16)]{GYY}.

A connection between the derivatives of the parallel section function
of an origin-symmetric convex body and the Fourier transforms of powers
of its Minkowski functional was established in \cite{GKS}; see also \cite{K-book}.
The
corresponding formula for non-symmetric bodies was obtained in
\cite{RY}.  Let $K$ be an infinitely smooth convex body in $\R^n$ containing the origin in its interior. Let $\Re q>-1$, $q\ne n-1$, then
\begin{align}\label{FracDer-RY}
	A_{K,\theta}^{(q)}(0) &= \frac{\cos(q\pi/2) }{2\pi(n-1-q)}\Bigl(\|x\|_K^{-n+q+1}+\|-x\|_K^{-n+q+1}\Bigr)^{\wedge}(\theta)\notag\\
	&-\frac{i\sin(q\pi/2) }{2\pi(n-1-q)}\Bigl(\|x\|_K^{-n+q+1}-\|-x\|_K^{-n+q+1}\Bigr)^{\wedge}(\theta) .
\end{align}
For the definition of fractional derivatives, including the sign convention, we follow  \cite[Section 2.6]{K-book}. 

In particular, if $k$ is a non-negative even integer, $k\ne n-1$, then
  \begin{equation}\label{RY}
	A_{K,\theta}^{(k)}(0) = \frac{(-1)^{k/2} }{2\pi(n-1-k)}\Bigl(\|x\|_K^{-n+k+1}+\|-x\|_K^{-n+k+1}\Bigr)^{\wedge}(\theta) .
\end{equation}  
    If $K$ is  origin-symmetric, this yields the formula obtained in \cite{GKS}:
  \begin{equation}\label{GKS}
	A_{K,\theta}^{(k)}(0) = \frac{(-1)^{k/2} }{\pi(n-1-k)}\Bigl(\|x\|_K^{-n+k+1}\Bigr)^{\wedge}(\theta) .
    \end{equation}  
  
Let $K$ and $L$ be origin-symmetric star bodies in $\mathbb R^n$.   Following Lutwak~\cite{L} we say that $K$ is the {\it intersection body of $L$} and we write $K=IL$, if 
$$\rho_{K}(\theta) = \mathrm{vol}_{n-1}(L\cap \theta^\perp),\quad \mbox{for all } \theta \in S^{n-1}.$$
Since $\mathrm{vol}_{n-1}(L\cap \theta^\perp)= A_{L,\theta}(0)$, formula (\ref{GKS}) implies
$$\|\theta\|_K^{-1} = \frac{1}{\pi(n-1)} \left(\|\cdot\|_L^{-n+1}\right)^\wedge(\theta), \quad \mbox{for all }  \theta \in S^{n-1}.$$
 
 \section{Auxiliary results}

 \begin{lemma}\label{mainLemma}
 \label{calculation} Let $K$ be a convex body in $\mathbb R^n$, $n\ge 2$, that contains the origin in its interior, and let   $a= \min_{u\in S^{n-1}} \rho_{K} (u)$. Then for 
 any  $H \in \mathcal{H}_m^n$, the function
 \[
 M_H(t)=\int_{S^{n-1}} A_{K,\theta} (t) H(\theta)\,  d \theta 
 \]
 is analytic in $t \in (-a,a)$. Moreover, if $n$ is odd, then it is a polynomial in $t \in (-a,a)$. 
 \end{lemma}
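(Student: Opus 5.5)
We write $M_H(t)$ in polar coordinates and apply the Funk--Hecke formula \eqref{FH}, so that all dependence on $t$ is pushed into an explicit one-dimensional kernel. For $|t|<a$ the hyperplane $\{\langle x,\theta\rangle=t\}$ meets the interior of $K$. We use the identity
\[
\int_{\mathbb R} A_{K,\theta}(s)\,g(s)\,ds=\int_K g(\langle x,\theta\rangle)\,dx
=\int_{S^{n-1}}\int_0^{\rho_K(u)} r^{n-1} g(r\langle u,\theta\rangle)\,dr\,du .
\]
Letting $g$ approximate $\delta_t$ and then integrating against $H(\theta)$, Funk--Hecke gives a formula of the form
\[
M_H(t)=|S^{n-2}|\int_{S^{n-1}} H(u)\int_0^{\rho_K(u)} r^{n-2}\,\Phi_m(t/r)\,dr\,du,
\qquad \Phi_m(z)=P_m^n(z)(1-z^2)_+^{\frac{n-3}{2}} .
\]
The factor $1/r$ comes from $\delta_t(r z)=r^{-1}\delta_{t/r}(z)$. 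Rigorously, we first pair $M_H$ with a test function $g(t)$ and apply Fubini, which is legitimate since $(1-z^2)^{(n-3)/2}$ is integrable for $n\ge2$. This identifies $M_H$ a.e.\ with the right-hand side. Since both sides are continuous in $t$ on $(-a,a)$ (by convexity $A_{K,\theta}$ is continuous in the interior range), the identity holds everywhere.

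Next we split the radial integral at $r=|t|$ (for $t\neq0$), or better, write
\[
\int_0^{\rho_K(u)}=\int_0^{\infty}-\int_{\rho_K(u)}^{\infty}
\]
with a suitable regularization. Since $\Phi_m(t/r)=0$ for $r<|t|$, and $|t|<a\le\rho_K(u)$, the integral over $(0,\rho_K(u))$ equals
\[
\int_{|t|}^{\rho_K(u)} r^{n-2}\Phi_m(t/r)\,dr .
\]
Substituting $r=|t|/|z|$, or expanding directly, we expand $\Phi_m(t/r)$ in powers of $t/r$. Here $(1-z^2)^{(n-3)/2}$ is analytic in $|z|<1$, and $P_m^n$ is a polynomial, so $\Phi_m(z)=\sum_j c_j z^j$ converges for $|z|<1$. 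The cleanest route is the decomposition
\[
\int_{|t|}^{\rho}=\int_{|t|}^{\infty}(\cdots)\;-\;\int_{\rho}^{\infty}(\cdots),
\]
handled by analytic continuation in an auxiliary exponent. That is, we replace $r^{n-2}$ by $r^{n-2-\lambda}$ with $\Re\lambda$ large, so that both pieces converge.

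The first piece, after substituting $r=|t|s$, equals $|t|^{n-1-\lambda}C_m(\lambda)$, where $C_m(\lambda)$ is independent of $u$. Integrated against $H(u)$ over the sphere it vanishes for $m\ge1$, since $\int H=0$. For $m=0$ it is a constant multiple of the explicit term $|t|^{n-1-\lambda}$, and we must check that its continuation to $\lambda=0$ is harmless. This is the main obstacle: for $n$ odd this term is $|t|^{n-1}$, an even polynomial, while for $n$ even $C_0(\lambda)$ should vanish at $\lambda=0$, or else combine with the other term. We would verify this via the Beta-function evaluation of $\int_1^\infty s^{n-2-\lambda}(1-s^{-2})^{(n-3)/2}ds$, whose continuation has its pole structure at specific $\lambda$ values; alternatively, the $m=0$ case can be treated directly, since $M_1(t)$ is the volume-type integral $\int_{S^{n-1}}A_{K,\theta}(t)\,d\theta$. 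The second piece is
\[
\int_{S^{n-1}}H(u)\int_{\rho_K(u)}^\infty r^{n-2-\lambda}\sum_j c_j t^j r^{-j}\,dr\,du=\sum_j c_j t^j\,\frac{1}{j+1-n+\lambda}\int_{S^{n-1}}H(u)\rho_K(u)^{n-1-\lambda-j}\,du .
\]
This continues to $\lambda=0$ termwise wherever $j\neq n-1$. The series converges for $|t|<a\le\rho_K(u)$, which gives analyticity on $(-a,a)$. For $j=n-1$ the pole must cancel against the pole of the first piece, producing a possible $t^{n-1}\log|t|$ term whose coefficient must vanish; this is exactly the consistency check in the obstacle above.

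Finally, for $n$ odd, $(1-z^2)^{(n-3)/2}$ is itself a polynomial. Hence $\Phi_m$ is a polynomial, so only finitely many $c_j$ are nonzero and $M_H$ is a polynomial on $(-a,a)$. A quicker alternative in this case avoids the continuation entirely: differentiating the formula for $M_H$ with respect to $t$ enough times kills the polynomial kernel, except for boundary contributions at $r=|t|$ that are themselves polynomial in $t$.
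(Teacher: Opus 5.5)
Your Funk--Hecke reduction to
\[
M_H(t)=|S^{n-2}|\int_{S^{n-1}}H(u)\int_{|t|}^{\rho_K(u)}r^{n-2}\Phi_m(t/r)\,dr\,du
\]
is the same as the paper's. After that, however, the proposal does not prove analyticity. The behaviour of your two continued pieces at $\lambda=0$, which you call the main obstacle, is only asserted (``should vanish'', ``must cancel'', ``must vanish''). The mechanism you describe for $j=n-1$ is also not what happens.

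For $m\ge1$ the first piece is identically zero, since it carries the factor $\int_{S^{n-1}}H=0$, so there is nothing for a pole to cancel against. Instead, $\int_{S^{n-1}}H\rho_K^{-\lambda}\,du=O(\lambda)$ for the same reason. The would-be pole is therefore removable and leaves a multiple of $t^{n-1}\int_{S^{n-1}}H\log\rho_K\,du$.

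For $m=0$ one has $c_{n-1}=0$: by parity if $n$ is even, and because $\deg\Phi_0=n-3$ if $n$ is odd. The Beta evaluation gives
\[
C_0(\lambda)=\frac{\Gamma\bigl(\frac{n-1}{2}\bigr)\Gamma\bigl(\frac{\lambda-n+1}{2}\bigr)}{2\,\Gamma\bigl(\frac{\lambda}{2}\bigr)},
\]
which vanishes at $\lambda=0$ for even $n$ and is finite for odd $n$. No $\log|t|$ term ever arises.

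With these checks, plus continuity at $t=0$ where your first piece diverges, your route does close; as written, the decisive step is missing. Similarly, in your ``quicker alternative'' for odd $n$, the boundary terms at $r=|t|$ are not polynomial in general. For $n=3$, $m=1$ one gets $\int_{|t|}^{\rho}t\,dr=t(\rho-|t|)$. These terms are only independent of $u$, and it is $\int_{S^{n-1}}H=0$ that removes them.

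The idea that makes all of this unnecessary, and which the paper uses, is to split the radial integral at $r=a$ rather than at $\infty$. Since $|t|<a\le\rho_K(u)$,
\[
\int_{|t|}^{\rho_K(u)}=\int_{|t|}^{a}+\int_{a}^{\rho_K(u)}.
\]
The first integral does not depend on $u$. It therefore integrates to zero against $H$ when $m\ge1$, while for $m=0$ it equals $\frac{1}{n-1}(a^2-t^2)^{(n-1)/2}$, which is analytic on $(-a,a)$ and a polynomial for odd $n$. In the second integral $r\ge a>|t|$, so $|t/r|\le|t|/a<1$ and the integrand is analytic in $t$ (polynomial in $t$ when $n$ is odd). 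No regularization, auxiliary exponent, or Gamma-function bookkeeping is needed.
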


 \begin{proof}
 	Recall that if \(f\) is an  integrable compactly supported function on
 	\(\mathbb R\), which is continuous at $t$, then
 	\begin{equation}\label{approx-identity}
 	f(t)=\lim_{\epsilon\to0^+}\frac{\epsilon}{2}
 	\int_{\mathbb R}|s-t|^{-1+\epsilon}f(s)\,ds.
 	\end{equation}
 	Applying this to \(M_H\), and using Fubini's theorem, we obtain
 	\begin{align*}
 		M_H(t)
 		&=
 		\lim_{\epsilon\to0^+}\frac{\epsilon}{2}
 		\int_{\mathbb R}|s-t|^{-1+\epsilon}M_H(s)\,ds\\
 		&=
 		\lim_{\epsilon\to0^+}\frac{\epsilon}{2}
 		\int_{S^{n-1}}\int_K
 		|\langle x,\theta\rangle-t|^{-1+\epsilon}
 		\,dx\,H(\theta)\,d\theta\\
 		&=
 		\lim_{\epsilon\to0^+}\frac{\epsilon}{2}
 		\int_{S^{n-1}}\int_0^{\rho_K(\xi)}r^{n-1}
 		\int_{S^{n-1}}
 		|r\langle\xi,\theta\rangle-t|^{-1+\epsilon}
 		H(\theta)\,d\theta\,dr\,d\xi .
 	\end{align*}
 	
For every \(\epsilon>0\) and every \(r>0\) with \(r\ne|t|\),
the Funk--Hecke formula gives
 	\[
 	\int_{S^{n-1}}
 	|r\langle\xi,\theta\rangle-t|^{-1+\epsilon}
 	H(\theta)\,d\theta
 	=
 	c_n H(\xi)
 	\int_{-1}^1
 	|rz-t|^{-1+\epsilon}
 	P_m^n(z)(1-z^2)^{\frac{n-3}{2}}\,dz,
 	\]
 	where \(c_n= | S^{n-2}|\).
 	Therefore
 	\begin{align*}
 		M_H(t)
 		&=
 		c_n\lim_{\epsilon\to0^+}\frac{\epsilon}{2}
 		\int_{S^{n-1}}H(\xi)
 		\int_0^{\rho_K(\xi)}r^{n-1}
 		\int_{-1}^1
 		|rz-t|^{-1+\epsilon}
 		P_m^n(z)(1-z^2)^{\frac{n-3}{2}}
 		\,dz\,dr\,d\xi .
 	\end{align*}
 	Making the change of variables $s=rz$ and interchanging the order of integration,
 	we obtain 
 	\begin{align*}&\int_0^{\rho_K(\xi)}r^{n-1}
 	\int_{-1}^1
 	|rz-t|^{-1+\epsilon}
 	P_m^n(z)(1-z^2)^{\frac{n-3}{2}}
 	\,dz\,dr \\
 	&=\int_0^{\rho_K(\xi)}r^{n-2}\int_{-r}^r
 	|s-t|^{-1+\epsilon}
 	P_m^n\left(\frac{s}{r}\right)
 	\left(1-\frac{s^2}{r^2}\right)^{\frac{n-3}{2}}
 	\,ds\,dr \\ &= 
 	\int_{-\rho_K(\xi)}^{\rho_K(\xi)}|s-t|^{-1+\epsilon}  
 	\int_{|s|}^{\rho_K(\xi)}
 	r^{n-2}
 	P_m^n\left(\frac{s}{r}\right)
 	\left(1-\frac{s^2}{r^2}\right)^{\frac{n-3}{2}}
 	\,dr\,ds.
 	\end{align*}

 	For \(\rho>0\), define
 	\[
 	\Phi_\rho(s)
 	=
 	\begin{cases}
 		\displaystyle
 		\int_{|s|}^{\rho}
 		r^{n-2}
 		P_m^n\!\left(\frac{s}{r}\right)
 		\left(1-\frac{s^2}{r^2}\right)^{\frac{n-3}{2}}\,dr,
 		& |s|<\rho,\\[3mm]
 		0, & |s|\ge\rho.
 	\end{cases}
 	\] 	
Then
 	\begin{align*}
 	M_H(t)
 	&=
 	c_n\lim_{\epsilon\to0^+}\frac{\epsilon}{2}
 \int_{S^{n-1}}H(\xi)	\int_{\mathbb R}|s-t|^{-1+\epsilon}\Phi_{\rho_K(\xi)}(s)\,ds\, d\xi\\
 &=
 c_n\lim_{\epsilon\to0^+}\frac{\epsilon}{2}
	\int_{\mathbb R}|s-t|^{-1+\epsilon}  \int_{S^{n-1}}H(\xi) \Phi_{\rho_K(\xi)}(s) \, d\xi\,ds.
 	\end{align*}
Let
 	\[
 	\Phi_H(s)
 	=
 	\int_{S^{n-1}}
 	H(\xi)\Phi_{\rho_K(\xi)}(s)\,d\xi.
 	\]
 Observe that the function \(\Phi_H\) is compactly supported and continuous on $(-a,a)$.   Applying   formula \eqref{approx-identity}, we obtain 
 	\[
 	M_H(t)
 	=
 	c_n\int_{S^{n-1}}H(\xi)\Phi_{\rho_K(\xi)}(t)\,d\xi,
 	\qquad |t|<a.
 	\]
 	
 	Since \(\rho_K(\xi)\ge a\), we can write, for $|t|<a$, 
 	\begin{align*}
 		\Phi_{\rho_K(\xi)}(t)
 		&=
 		\Phi_a(t)
 		+
 		\int_a^{\rho_K(\xi)}
 		r^{n-2}
 		P_m^n\!\left(\frac{t}{r}\right)
 		\left(1-\frac{t^2}{r^2}\right)^{\frac{n-3}{2}}\,dr.
 	\end{align*}
 	Therefore, for every \(|t|<a\),
 	\begin{align*}
 		M_H(t)
 		&=
 		c_n\Phi_a(t)\int_{S^{n-1}}H(\xi)\,d\xi\\
 		&\quad+
 		c_n\int_{S^{n-1}}H(\xi)
 		\int_a^{\rho_K(\xi)}
 		r^{n-2}
 		P_m^n\!\left(\frac{t}{r}\right)
 		\left(1-\frac{t^2}{r^2}\right)^{\frac{n-3}{2}}
 		\,dr\,d\xi.
 	\end{align*}
 	The second term is real analytic on \((-a,a)\). Moreover, it is a polynomial if \(n\) is
 	odd. 
 	
 	It remains to consider the first term. If \(m\ge1\), then
 	\[
 	\int_{S^{n-1}}H(\xi)\,d\xi=0,
 	\]
 	so this term vanishes. If \(m=0\), then \(P_0^n=1\), and
 	\[
 	\Phi_a(t)
 	=
 	\int_{|t|}^{a}
 	r^{n-2}
 	\left(1-\frac{t^2}{r^2}\right)^{\frac{n-3}{2}}\,dr
 	=
 	\frac{1}{n-1}(a^2-t^2)^{\frac{n-1}{2}}.
 	\]
 	This is real analytic on \((-a,a)\), and it is a polynomial when
 	\(n\) is odd.

 \end{proof}
  
\begin{lemma}\label{lem:continuation-separable}
Let \(K\subset\mathbb{R}^n\) be a convex body containing the origin in its
interior, and
suppose that
\[
-a<t_1<t_2<a,
\]
where
\(
a=\min_{\theta\in S^{n-1}}\rho_K(\theta).
\)
If
\begin{equation}\label{eq:separation-on-interval}
A_{K,\theta}(t)
=
\sum_{m=1}^N a_m(\theta)b_m(t)
\end{equation}
for every \(\theta\in S^{n-1}\) and every \(t\in(t_1,t_2)\), then there exist an integer $ N'\le N$,  linearly independent continuous functions $\tilde a_1, \ldots,  \tilde a_{ N'}$ on the sphere, and analytic functions $\tilde b_1,\ldots, \tilde b_{N'}$ on the interval $(-a,a)$ such that
\[
A_{K,\theta}(t)
=
\sum_{m=1}^{N'} \tilde a_m(\theta)\tilde b_m(t)
\]
holds for every \(\theta\in S^{n-1}\) and every \(t\in(-a,a)\).

\end{lemma}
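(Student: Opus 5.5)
The plan is to move from the pointwise separable representation to its spherical harmonic components, where Lemma~\ref{mainLemma} gives analyticity, and then to return to the pointwise statement by analytic continuation.

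\textbf{Step 1: reduce to linearly independent factors.} Set $V=\mathrm{span}\{\theta\mapsto A_{K,\theta}(t): t\in(t_1,t_2)\}$. By \eqref{eq:separation-on-interval}, $V\subset\mathrm{span}\{a_1,\dots,a_N\}$, so $N'=\dim V\le N$. Choose points $s_1,\dots,s_{N'}\in(t_1,t_2)$ such that the functions $\tilde a_j(\theta)=A_{K,\theta}(s_j)$ form a basis of $V$. Each $\tilde a_j$ is continuous on the sphere, because for fixed $s$ with $|s|<a$ the hyperplane $\{\langle x,\theta\rangle=s\}$ meets the interior of $K$, and the section volume is then continuous in $\theta$. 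Expanding in this basis, for $t\in(t_1,t_2)$ we have
\[
A_{K,\theta}(t)=\sum_{j=1}^{N'}\tilde a_j(\theta)\beta_j(t)
\]
for some coefficient functions $\beta_j$ on $(t_1,t_2)$. This expansion is unique by linear independence.

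\textbf{Step 2: express the coefficients through harmonic components.} Since $\tilde a_1,\dots,\tilde a_{N'}$ are linearly independent continuous functions, and spherical harmonics are dense in $C(S^{n-1})$, we can find spherical harmonics $H_1,\dots,H_{N'}$ such that the Gram matrix $G_{ij}=\int_{S^{n-1}}\tilde a_j H_i$ is invertible. Otherwise the vectors $\bigl(\int \tilde a_j H\bigr)_j$, $H$ harmonic, would lie in a proper subspace. A nonzero vector $c$ annihilating them would then give $\int(\sum_j c_j\tilde a_j)H=0$ for all $H$, hence $\sum_j c_j\tilde a_j=0$, a contradiction. Integrating the representation against $H_i$ gives
\[
M_{H_i}(t)=\sum_j G_{ij}\beta_j(t), \qquad t\in(t_1,t_2),
\]
so $\beta=G^{-1}(M_{H_1},\dots,M_{H_{N'}})^T$ on $(t_1,t_2)$. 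Now define $\tilde b=G^{-1}(M_{H_1},\dots,M_{H_{N'}})^T$ on all of $(-a,a)$. By Lemma~\ref{mainLemma}, each $\tilde b_j$ is analytic there and agrees with $\beta_j$ on $(t_1,t_2)$.

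\textbf{Step 3: extend the identity to $(-a,a)$ (the main obstacle).} It remains to show that $A_{K,\theta}(t)=\sum_j\tilde a_j(\theta)\tilde b_j(t)$ for all $|t|<a$. Analyticity is available only for the harmonic components, not pointwise in $\theta$, so we argue componentwise. For an arbitrary spherical harmonic $H$, the function
\[
M_H(t)-\sum_j\Bigl(\int_{S^{n-1}}\tilde a_j H\Bigr)\tilde b_j(t)
\]
is analytic on $(-a,a)$ by Lemma~\ref{mainLemma} and vanishes on $(t_1,t_2)$, so it vanishes on $(-a,a)$. Hence for each fixed $t\in(-a,a)$, the continuous function $\theta\mapsto A_{K,\theta}(t)-\sum_j\tilde a_j(\theta)\tilde b_j(t)$ is orthogonal to every spherical harmonic. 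By completeness of spherical harmonics it vanishes identically, which proves the identity. The delicate point is the continuity in $\theta$ of $A_{K,\theta}(t)$ for $|t|<a$, which is needed to pass from vanishing almost everywhere to vanishing everywhere. This continuity holds because the corresponding hyperplanes pass through the interior of $K$.
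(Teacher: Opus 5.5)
Your proposal is correct and follows the paper's proof essentially step for step. It uses the same basis $\tilde a_j(\theta)=A_{K,\theta}(s_j)$ of the span $V$, the same invertible matrix built from spherical harmonics via the completeness argument, the same definition of $\tilde b$ through $M_{H_i}$ and Lemma~\ref{mainLemma}, and the same componentwise analytic continuation, concluded by completeness together with continuity in $\theta$. Your explicit justification that $\theta\mapsto A_{K,\theta}(t)$ is continuous for $|t|<a$ is a small detail the paper leaves implicit.
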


\begin{proof}
For \(t\in(t_1,t_2)\), set

$$
F_t(\theta)=A_{K,\theta}(t).
$$

By the assumed separable representation, the space
$$
V=\operatorname{span}\{F_t:t\in(t_1,t_2)\}
$$
has dimension at most \(N\). Let \(N'=\dim V\), and choose
\(s_1,\dots,s_{N'}\in(t_1,t_2)\) such that
\(F_{s_1},\dots,F_{s_{N'}}\) form a basis of \(V\). Setting
$$
\tilde a_j(\theta)=F_{s_j}(\theta)=A_{K,\theta}(s_j),
$$
we obtain continuous, linearly independent functions
\(\tilde a_1,\dots,\tilde a_{N'}\), and for every \(t\in(t_1,t_2)\) there are unique
coefficients \(\tilde b_1(t),\dots,\tilde b_{N'}(t)\) such that

$$
A_{K,\theta}(t)=\sum_{j=1}^{N'} \tilde a_j(\theta)\tilde b_j(t).
$$
For simplicity of notation, from now on we will write $a_j$,  $b_j$, and \(N\)  instead of $\tilde a_j$, $\tilde b_j$, and \(N'\).

For a spherical harmonic \(H\), let
\[
M_H(t)
=
\int_{S^{n-1}}A_{K,\theta}(t)H(\theta)\,d\theta.
\]
By Lemma~\ref{mainLemma}, \(M_H\) is real analytic on \((-a,a)\).

We claim that there exist spherical harmonics \(H_1,\dots,H_N\) such that
the matrix
\[
\mathcal A
=
\left(
\int_{S^{n-1}}a_m(\theta)H_j(\theta)\,d\theta
\right)_{j,m=1}^N
\]
is invertible. Indeed, otherwise the vectors
\[
\left(
\int_{S^{n-1}}a_1(\theta)H(\theta)\,d\theta,
\dots,
\int_{S^{n-1}}a_N(\theta)H(\theta)\,d\theta
\right),
\]
as \(H\) ranges over all spherical harmonics, would span a proper subspace
of \(\mathbb{R}^N\). Hence there would exist scalars
\(\lambda_1,\dots,\lambda_N\), not all zero, such that
\[
\int_{S^{n-1}}
\left(
\sum_{m=1}^N\lambda_m a_m(\theta)
\right)
H(\theta)\,d\theta
=
0
\]
for every spherical harmonic \(H\). By completeness of the spherical
harmonics in \(L^2(S^{n-1})\), this would imply
\[
\sum_{m=1}^N\lambda_m a_m=0,
\]
contradicting the linear independence of \(a_1,\dots,a_N\).

Define functions \(\beta_1,\dots,\beta_N\) on \((-a,a)\) by
\[
\begin{pmatrix}
\beta_1(t)\\
\vdots\\
\beta_N(t)
\end{pmatrix}
=
\mathcal A^{-1}
\begin{pmatrix}
M_{H_1}(t)\\
\vdots\\
M_{H_N}(t)
\end{pmatrix}.
\]
Each \(\beta_m\) is real analytic on \((-a,a)\).

For \(t\in(t_1,t_2)\), equation
\eqref{eq:separation-on-interval} gives
\[
\begin{pmatrix}
M_{H_1}(t)\\
\vdots\\
M_{H_N}(t)
\end{pmatrix}
=
\mathcal A
\begin{pmatrix}
b_1(t)\\
\vdots\\
b_N(t)
\end{pmatrix}.
\]
Therefore,
\[
\beta_m(t)=b_m(t),
\qquad
t\in(t_1,t_2),\quad m=1,\dots,N.
\]

Let \(H\) be an arbitrary spherical harmonic and set
\[
R_H(t)
=
M_H(t)
-
\sum_{m=1}^N
\beta_m(t)
\int_{S^{n-1}}a_m(\theta)H(\theta)\,d\theta.
\]
The function \(R_H\) is real analytic on \((-a,a)\) and vanishes on
\((t_1,t_2)\). Therefore,
\[
R_H(t)=0,
\qquad t\in(-a,a).
\]

Fix \(t\in(-a,a)\). It follows that the function
\[
\theta\longmapsto
A_{K,\theta}(t)
-
\sum_{m=1}^N a_m(\theta)\beta_m(t)
\]
is orthogonal to every spherical harmonic. By completeness of the
spherical harmonics, it vanishes almost everywhere. Since it is continuous
in \(\theta\), it vanishes everywhere. Hence
\[
A_{K,\theta}(t)
=
\sum_{m=1}^N a_m(\theta)\beta_m(t),
\qquad
\theta\in S^{n-1},\quad t\in(-a,a).
\]
Renaming \(\beta_m\) as \(b_m\) completes the proof.
\end{proof}

The next corollary is a particular case of  the previous lemma  when the functions $b_m$ are polynomials.

\begin{corollary}\label{cor:continuation-polynomial}
Let \(K\subset\mathbb{R}^n\),  \(a\), $t_1$, and $t_2$ be as in
Lemma~\ref{lem:continuation-separable}. Suppose that
\[
A_{K,\theta}(t)
=
\sum_{m=0}^N a_m(\theta)t^m
\]
for every \(\theta\in S^{n-1}\) and every \(t\in(t_1,t_2)\). Then
\[
A_{K,\theta}(t)
=
\sum_{m=0}^N a_m(\theta)t^m
\]
for every \(\theta\in S^{n-1}\) and every \(t\in(-a,a)\).

\end{corollary}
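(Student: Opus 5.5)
The plan is to follow the proof of Lemma~\ref{lem:continuation-separable} directly, with $b_m(t)=t^m$. The point to track is that the extension produced there is actually the same polynomial representation on all of $(-a,a)$, not merely some analytic separable one. The most direct route avoids the basis-reduction step and works harmonic by harmonic.

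Fix a spherical harmonic $H$ and set
\[
M_H(t)=\int_{S^{n-1}}A_{K,\theta}(t)H(\theta)\,d\theta .
\]
By Lemma~\ref{mainLemma}, $M_H$ is real analytic on $(-a,a)$. For $t\in(t_1,t_2)$ the hypothesis gives
\[
M_H(t)=\sum_{m=0}^N t^m\int_{S^{n-1}}a_m(\theta)H(\theta)\,d\theta .
\]
For this to make sense, each $a_m$ must be integrable. To see this, pick distinct points $s_0,\dots,s_N\in(t_1,t_2)$. The Vandermonde system $A_{K,\theta}(s_j)=\sum_m a_m(\theta)s_j^m$ expresses each $a_m(\theta)$ as a fixed linear combination of the continuous functions $\theta\mapsto A_{K,\theta}(s_j)$. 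Hence every $a_m$ is continuous on $S^{n-1}$.

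Next, consider the difference
\[
R_H(t)=M_H(t)-\sum_{m=0}^N t^m\int_{S^{n-1}}a_m H .
\]
It is real analytic on $(-a,a)$ and vanishes on $(t_1,t_2)$, so it vanishes identically on $(-a,a)$.

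Now fix $t\in(-a,a)$. The continuous function
\[
\theta\mapsto A_{K,\theta}(t)-\sum_{m=0}^N a_m(\theta)t^m
\]
is orthogonal to every spherical harmonic. By completeness of spherical harmonics in $L^2(S^{n-1})$ it vanishes almost everywhere, and by continuity it vanishes everywhere. Continuity of $\theta\mapsto A_{K,\theta}(t)$ holds for $|t|<a$ because the hyperplane then passes through the interior of $K$.

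The only point needing care is the continuity and integrability of the $a_m$, which the Vandermonde argument handles. Everything else is the identity theorem for analytic functions together with completeness of spherical harmonics.
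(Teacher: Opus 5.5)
Your proof is correct and is essentially the paper's argument: the paper treats the corollary as the special case $b_m(t)=t^m$ of Lemma~\ref{lem:continuation-separable}, whose proof runs exactly this argument — analyticity of $M_H$ from Lemma~\ref{mainLemma}, then the identity theorem, then completeness of spherical harmonics. Because $t^m$ is already analytic on $(-a,a)$, you rightly skip the basis reduction and matrix inversion, and your Vandermonde step gives continuity of the $a_m$ (the lemma gets continuity by taking $\tilde a_j=A_{K,\cdot}(s_j)$) and produces the extension with the same coefficients $a_m$ directly.
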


We will also need the following extension lemma.

\begin{lemma}
	\label{No Smooth Extension}
	Let \(p>0\) be non-integer, {$\varepsilon> 0$}, and set \(k=\lfloor p\rfloor\). Suppose that
	\[
	f:(\tau-\varepsilon,\tau]\to\mathbb{R}
	\]
	satisfies
	\[
	f(t)=C(\tau-t)^p(1+o(1))
	\qquad\text{as }t\to\tau^-,
	\]
	where \(C>0\). Then \(f\) does not admit a \(C^{k+1}\)-extension to any
	two-sided neighborhood of \(\tau\).
\end{lemma}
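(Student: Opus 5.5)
We argue by contradiction. Suppose $g$ is a $C^{k+1}$ function on an open interval $(\tau-\delta,\tau+\delta)$, $0<\delta\le\varepsilon$, with $g=f$ on $(\tau-\delta,\tau]$. The idea is that $f$ behaves like $(\tau-t)^p$ with $k<p<k+1$, so it cannot have $k+1$ derivatives at $\tau$.

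First, Taylor's theorem with the Lagrange (or integral) remainder at the point $\tau$ gives
\[
g(t)=\sum_{j=0}^{k+1}\frac{g^{(j)}(\tau)}{j!}(t-\tau)^j+o\bigl(|t-\tau|^{k+1}\bigr),\qquad t\to\tau .
\]
Set $P(t)=\sum_{j=0}^{k+1}\frac{g^{(j)}(\tau)}{j!}(t-\tau)^j$. On the left side, where $g=f$, this yields
\[
f(t)-P(t)=o\bigl((\tau-t)^{k+1}\bigr)\qquad\text{as } t\to\tau^- .
\]

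Next we compare with the hypothesis. Write $s=\tau-t\to0^+$. Then
\[
P(\tau-s)=C s^p(1+o(1))+o(s^{k+1}).
\]
Since $p<k+1$, the term $o(s^{k+1})$ is $o(s^p)$. Hence $P(\tau-s)=Cs^p(1+o(1))$.

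Now look at $P(\tau-s)$ as a polynomial in $s$. If $P\not\equiv0$, let $j_0$ be the lowest index with a nonzero coefficient $c_{j_0}$. Then $P(\tau-s)\sim c_{j_0}s^{j_0}$. Comparing $c_{j_0}s^{j_0}\sim Cs^p$ with $C\ne0$ forces $j_0=p$. This is impossible, because $j_0$ is an integer and $p$ is not. If $P\equiv0$, then $Cs^p(1+o(1))=0$ for all small $s>0$, which contradicts $C>0$. In either case we reach a contradiction, so no $C^{k+1}$ extension exists.

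The only point that needs care is the uniform handling of the remainder. We use the Peano form of the remainder, which is valid for $C^{k+1}$ functions, evaluated at one-sided points. Also, $f(\tau)=g(\tau)$ holds by definition. No step here should be a genuine obstacle; the crux is the strict inequality $k<p<k+1$, which places $s^p$ strictly between consecutive monomials.
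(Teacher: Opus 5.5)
Your proof is correct and is essentially the paper's argument: both compare the Taylor expansion of the extension $g$ at $\tau$ with $C(\tau-t)^p$ and use $k<p<k+1$ to rule out a match. The only difference is packaging. The paper first shows $g^{(j)}(\tau)=0$ for $j\le k$ via the lowest nonvanishing derivative, and then contradicts the bound $|g(t)|\le M|t-\tau|^{k+1}$. You handle all orders up to $k+1$ at once with a single Peano expansion and a lowest-order-term comparison.
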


\begin{proof}
	Suppose, to the contrary, that there exist \(\delta>0\) and a function
	\[
	g\in C^{k+1}\bigl((\tau-\delta,\tau+\delta)\bigr)
	\]
	such that
	\[
	g(t)=f(t)
	\qquad
	\text{for }
	t\in(\tau-\min\{\delta,\varepsilon\},\tau].
	\]
	
	Since \(k=\lfloor p\rfloor\) and \(p\notin\mathbb{Z}\), we have
	\[
	k<p<k+1.
	\]
	
	We claim that
	\[
	g^{(j)}(\tau)=0,
	\qquad j=0,\dots,k.
	\]
	Suppose otherwise, and let \(j\leq k\) be the smallest index such that
	\(g^{(j)}(\tau)\neq 0\). Taylor's theorem  gives
	\[
	g(t)
	=
	\frac{g^{(j)}(\tau)}{j!}(t-\tau)^j
	+
	o\bigl(|t-\tau|^j\bigr)
	\qquad\text{as }t\to\tau^-.
	\]
	Therefore,
	\[
	\lim_{t\to\tau^-}
	\frac{g(t)}{(\tau-t)^j}
	=
	(-1)^j\frac{g^{(j)}(\tau)}{j!}
	\neq 0.
	\]
	On the other hand, since \(j\leq k<p\),
	\[
	\lim_{t\to\tau^-}
	\frac{g(t)}{(\tau-t)^j}
	=
	\lim_{t\to\tau^-}
	\frac{f(t)}{(\tau-t)^j}
	=
	\lim_{t\to\tau^-}
	C(\tau-t)^{p-j}(1+o(1))
	=
	0,
	\]
	which is a contradiction. Hence
	\[
	g^{(j)}(\tau)=0,
	\qquad j=0,\dots,k.
	\]
	
	Since \(g\in C^{k+1}\), Taylor's theorem yields
	\[
	|g(t)|\leq M|t-\tau|^{k+1}
	\]
	for all \(t<\tau\) sufficiently close to \(\tau\), where \(M>0\) is a
	constant. Consequently,
	\[
	\left|
	\frac{f(t)}{(\tau-t)^{k+1}}
	\right|
	=
	\left|
	\frac{g(t)}{(\tau-t)^{k+1}}
	\right|
	\leq M.
	\]
	However,
	\[
	\frac{f(t)}{(\tau-t)^{k+1}}
	=
	C(\tau-t)^{p-k-1}(1+o(1)),
	\]
	which tends to infinity as \(t\to\tau^-\), since \(p-k-1<0\).
	This contradicts the preceding bound.
	
	Therefore, no such \(C^{k+1}\)-extension exists.
\end{proof}

Finally, we prove the following smoothness lemma.

 \begin{lemma}\label{lem:smoothness-from-sections}
 	Let \(K\subset\mathbb R^n\), \(n\ge2\), be a convex body containing
 	the origin in its interior. Suppose that there exists
 	\(\varepsilon>0\) such that \(A_{K,\theta}(t)\) is \(C^\infty\) in
 	\(t\) for \(|t|<\varepsilon\), and, for every integer \(j\ge0\),
 	\[
 	(\theta,t)\longmapsto A_{K,\theta}^{(j)}(t)
 	\]
 	is continuous on
 	\(S^{n-1}\times(-\varepsilon,\varepsilon)\).
 	Then \(K\) is infinitely smooth.
 \end{lemma}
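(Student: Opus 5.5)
The plan is to recover from the $t$-derivatives of $A_{K,\theta}$ at $t=0$ the spherical harmonic coefficients of powers of the radial function, show these coefficients decay fast, and then convert smoothness of those powers into smoothness of $\rho_K$.

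\emph{Step 1: a harmonic identity for the derivatives at $0$.} I would reuse the representation from the proof of Lemma~\ref{mainLemma}. For $H\in\mathcal H_m^n$ and $|t|<a$,
\[
M_H(t)=c_n\int_{S^{n-1}}H(\xi)\Phi_{\rho_K(\xi)}(t)\,d\xi .
\]
Changing variables $r=\rho u$ gives $\Phi_\rho(t)=\rho^{n-1}\Phi_1(t/\rho)$ for $|t|<\rho$. Since $\rho_K\ge a$, differentiation under the integral for $|t|<\min(a,\varepsilon)$ yields
\[
\int_{S^{n-1}}A^{(k)}_{K,\theta}(0)H(\theta)\,d\theta=c_n\,\Phi_1^{(k)}(0)\int_{S^{n-1}}H(\xi)\rho_K(\xi)^{n-1-k}\,d\xi .
\]
The left side may be differentiated under the integral by the assumed joint continuity of $A^{(j)}_{K,\theta}(t)$. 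The $\Phi_a$ term from the proof of Lemma~\ref{mainLemma} appears only for $m=0$ and is harmless.

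\emph{Step 2: parity and evaluation of the multipliers.} Because $P_m^n$ has the parity of $m$, $\Phi_1$ is even or odd according to $m$. Hence $\Phi_1^{(k)}(0)=0$ unless $k\equiv m \pmod 2$. For $k\equiv m$, I expect an explicit value of Gamma-function type, essentially the Funk--Hecke multiplier of the Fourier transform of a homogeneous function of degree $k-n+1$; this is consistent with \eqref{RY} and \eqref{FracDer-RY}. Writing $g_k(\theta)=A^{(k)}_{K,\theta}(0)$, which is continuous and hence in $L^2(S^{n-1})$, I obtain the following. For $k$ even, the even harmonic components of $\rho_K^{n-1-k}=\|\cdot\|_K^{k-n+1}$ equal those of $g_k$ divided by $c(m,k)$. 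For $k$ odd, the same holds for the odd components.

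\emph{Step 3: decay of the multipliers (main obstacle).} The key quantitative point is that $c(m,k)\neq0$ for $m\equiv k$ and that $|c(m,k)|\gtrsim m^{\,k+1-n/2}$ as $m\to\infty$. Heuristically this is the growth of $\Gamma\big((m+n+d)/2\big)/\Gamma\big((m-d)/2\big)$ with $d=k-n+1$. I would first try to compute $\Phi_1^{(k)}(0)$ in closed form via the Rodrigues formula for $P_m^n$, or through \eqref{approx-identity} and the Fourier transform of $|x|^{d}H$, and then apply Stirling's formula.

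Granting this bound, $\sum_m m^{2(k+1-n/2)}\|(\rho_K^{n-1-k})_m\|^2<\infty$ over the appropriate parity class. Thus the even part of $\|\cdot\|_K^{k-n+1}$ (for even $k$), and the odd part (for odd $k$), lie in the Sobolev space $H^{k+1-n/2}(S^{n-1})$. Letting $k\to\infty$, with finitely many $k$ sufficing for each fixed order of smoothness, makes these parts $C^\infty$.

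\emph{Step 4: recovering $\rho_K$.} Fix a large even $p=k-n+1$ and put $X(\xi)=\|\xi\|_K$ and $Y(\xi)=\|-\xi\|_K$. Then
\[
F=X^p+Y^p \quad\text{and}\quad G=X^{p+1}-Y^{p+1}
\]
are both $C^\infty$ on the sphere. The map $(X,Y)\mapsto(F,G)$ on $(0,\infty)^2$ is injective. Indeed, on each level set of $F$ the coordinate $X$ increases while $Y$ decreases, so $G$ is strictly monotone there. Its Jacobian is
\[
-p(p+1)\bigl(X^{p-1}Y^{p}+X^{p}Y^{p-1}\bigr)\neq0 .
\]
By the inverse function theorem, $X$ and $Y$ are smooth functions of $(F,G)$. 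Hence $\|\cdot\|_K\in C^\infty(S^{n-1})$, and $K$ is infinitely smooth.
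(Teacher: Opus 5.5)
Your strategy is viable and genuinely different from the paper's, but four points are wrong as written and need repair.

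(1) For $m\ge1$, $\Phi_1$ is in general not smooth at $0$, so $\Phi_1^{(k)}(0)$ need not exist. For example, for $n=2$, $m=1$ one computes $\Phi_1(s)=s\log\frac{1+\sqrt{1-s^2}}{|s|}$. In general $\Phi_1$ contains non-analytic terms homogeneous of degree $n-1$, such as $\operatorname{sgn}(s)^m|s|^{n-1}$ and $s^{n-1}\log|s|$. Your remark about the $\Phi_a$ term does not address this. The remedy is as follows. After rescaling, $\rho^{n-1}\Phi_1(t/\rho)$ splits into three pieces: $\rho^{n-1}$ times an analytic function of $t/\rho$, terms independent of $\rho$, and a multiple of $t^{n-1}\log\rho$. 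The $\rho$-independent terms integrate to zero against $H$ when $m\ge1$, and the last term affects only $k=n-1$. For $m=0$, $\Phi_1=\frac{1}{n-1}(1-s^2)^{(n-1)/2}$ is analytic anyway. Hence for $k\ne n-1$ your identity holds with $c(m,k)=-c_n\,k!\,q_k/(k-n+1)$, where $q_k$ is the $k$-th Taylor coefficient at $0$ of $P_m^n(v)(1-v^2)^{(n-3)/2}$.

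(2) The claim $c(m,k)\ne0$ for all $m\equiv k$ is false when $n$ is odd. In that case $P_m^n(v)(1-v^2)^{(n-3)/2}$ is a polynomial of degree $m+n-3$, so $c(m,k)=0$ for $m\le k+1-n$. Equivalently, $|c(m,k)|$ is a constant times $|\Gamma(\frac{m+k+1}{2})/\Gamma(\frac{m+n-k-1}{2})|$, and $1/\Gamma$ vanishes in that range; this is the mechanism behind the polynomiality in Lemma~\ref{mainLemma}. The failure is harmless, because only finitely many harmonics, each smooth, are lost. The same Gamma ratio also gives the growth $m^{k+1-n/2}$ you need.

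(3) A single $k$ gives only finite smoothness. The relevant part of $\|\cdot\|_K^{p}$, with $p=k-n+1$, lies in $H^{p+n/2}(S^{n-1})\subset C^{p}(S^{n-1})$, not in $C^\infty$. So in Step 4 you get $F,G\in C^p$, hence $\|\cdot\|_K\in C^p$, and only then may you let $p\to\infty$.

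(4) Even $k$ controls $X^p+Y^p$ and odd $k$ controls $X^p-Y^p$, with $k=p+n-1$. Your choice of even $p$ is therefore right only for odd $n$; for even $n$ take $p$ odd. The injectivity and Jacobian argument is unchanged.

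With these fixes, here is how your route compares with the paper's. The paper avoids harmonic-by-harmonic analysis by working with fractional derivatives. It starts from \eqref{FracDer-RY} in the range $-1<\Re q<0$, where no smoothness of $K$ is required, continues the resulting distributional identity analytically in $q$, and evaluates it at the half-integers $q=n-1+m+\frac12$. There $\sin(\pi q)\ne0$, so the $\theta\leftrightarrow-\theta$ system yields $\|\cdot\|_K^{-n+1+q}$ itself, with no splitting into even and odd parts, and no Gamma factor has a pole. The outcome expresses $\|-\theta\|_K^{m+1/2}$ as a spherical convolution of a continuous function with the $C^m$ kernels $|\langle\theta,u\rangle|^{m+1/2}$ and $\operatorname{sgn}\langle\theta,u\rangle\,|\langle\theta,u\rangle|^{m+1/2}$. $C^m$ smoothness then follows by differentiating under the integral, uniformly in all dimensions.

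Your route stays with integer derivatives and the elementary representation from Lemma~\ref{mainLemma}, avoiding distributional Fourier transforms and analytic continuation. The price is explicit multiplier asymptotics, Sobolev embedding, the odd-dimensional degeneracy, and the two-power inversion trick needed to recombine the even and odd parts.
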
 
 
 \begin{proof}
 	To prove that $K$ is infinitely smooth, we will show that $\|\cdot\|_K\in C^{m} (S^{n-1})$ for every $m\in \mathbb N$. 
To this end, we will represent a positive power of
\(\|\cdot\|_K\) as a spherical convolution of a continuous function
with an \(m\)-times differentiable zonal kernel; see
\eqref{eq:half-integer} below.

 We first observe that, for every \(\theta\in S^{n-1}\), the
 fractional derivative
 \[
 q\longmapsto A_{K,\theta}^{(q)}(0)
 \]
 extends to an entire function of \(q\); see
 \cite[Section~2.6]{K-book}. Moreover, \(A_{K,\theta}^{(q)}(0)\) is bounded uniformly in
 \(\theta\) when \(q\) ranges over a bounded subset of \(\mathbb C\). Indeed, the latter follows
 from the joint continuity of
 \(A_{K,\theta}^{(j)}(t)\), \(j\ge0\), and the compactness of
 \(S^{n-1}\).

 	For $-1<\Re q<0$, formula~\eqref{FracDer-RY}, which in this range
 	does not require any smoothness assumption on $K$, can be written as
 	\[
 	A_{K,\theta}^{(q)}(0)
 	=
 	\frac{1}{2\pi(n-1-q)}
 	\left(
 	e^{-i\pi q/2}\left( \|\cdot\|_K^{-n+1+q}\right)^\wedge(\theta)
 	+
 	e^{i\pi q/2}\left( \|\cdot\|_K^{-n+1+q}\right)^\wedge(-\theta)
 	\right).
 	\]
 	Replacing $\theta$ by $-\theta$ and solving the resulting system of
 	two equations, we obtain
 	\begin{equation}\label{eq:single-Fourier-sine}
 		\sin(\pi q)\left( \|\cdot\|_K^{-n+1+q}\right)^\wedge(\theta)
 		=
 		i\pi(n-1-q)
 		\left(
 		e^{-i\pi q/2}A_{K,\theta}^{(q)}(0)
 		-
 		e^{i\pi q/2}A_{K,-\theta}^{(q)}(0)
 		\right).
 	\end{equation}
 	
 	For $q\in\mathbb C$, let $F_q$ be the homogeneous function on
 	$\mathbb R^n\setminus\{0\}$ of degree $-q-1$ whose restriction to
 	$S^{n-1}$ is
 	\begin{equation}\label{eq:def-F}
 		F_q(\theta)
 		=
 		i\pi(n-1-q)
 		\left(
 		e^{-i\pi q/2}A_{K,\theta}^{(q)}(0)
 		-
 		e^{i\pi q/2}A_{K,-\theta}^{(q)}(0)
 		\right), \qquad \theta\in S^{n-1}.
 	\end{equation}
 	Thus for  	$-1<\Re q<0$, \eqref{eq:single-Fourier-sine} gives  
 	\begin{equation}\label{eq:Fq-equality}
 	\sin(\pi q)\langle \left( \|\cdot\|_K^{-n+1+q}\right)^\wedge, \varphi\rangle=\langle F_q, \varphi\rangle,
 	\end{equation}
 	for any test function  	$\varphi\in\mathcal S(\mathbb R^n)$.
 	
 	We claim that this identity extends to the strip
 	\begin{equation}\label{eq:q-strip}
 		-1<\Re q<n-1.
 	\end{equation}
  Indeed, in this strip $F_q$ is locally
 	integrable, since it is homogeneous of degree $-q-1$. Hence, for any
 	$\varphi\in\mathcal S(\mathbb R^n)$,
 	\[
 	\langle F_q,\varphi\rangle
 	=	\int_{\mathbb R^n}
 	F_q(x) \varphi(x)\,dx=
 	\int_{S^{n-1}}F_q(u)
 	\int_0^\infty r^{n-q-2}\varphi(ru)\,dr\,du,
 	\]
 	and  differentiation under the integral sign therefore shows that this is analytic in $q$ on the strip \eqref{eq:q-strip}.
 	On the other hand,
 	\[
 	\langle  \left( \|\cdot\|_K^{-n+1+q}\right)^\wedge,\varphi\rangle
 	=
 	\langle    \|\cdot\|_K^{-n+1+q} ,\widehat\varphi\rangle
 	=
 	\int_{\mathbb R^n}
 	\|x\|_K^{-n+1+q}\widehat\varphi(x)\,dx
 	\]
 	is analytic for $\Re q>-1$. Therefore, for every test function
 	$\varphi$, both sides of \eqref{eq:Fq-equality}
 	are analytic on the strip \eqref{eq:q-strip}. Since they agree for
 	$-1<\Re q<0$, they agree for  $-1<\Re q<n-1$.
 	
 Using  $\widehat \varphi$ instead of $\varphi$ in \eqref{eq:Fq-equality}, we obtain
 $$\sin(\pi q)\langle \left( \|\cdot\|_K^{-n+1+q}\right)^\wedge, \widehat\varphi\rangle=\langle F_q, \widehat\varphi\rangle$$
 	
By the definition of the Fourier transform of distributions \eqref{action} and the formula 
 	\[
 	\widehat{\widehat f\,\,}(x)=(2\pi)^n f(-x),
 	\]
 	we obtain
 	\begin{equation}\label{eq:Fq-second}
 		(2\pi)^n\sin(\pi q)
 	\langle	\|-x\|_K^{-n+1+q},\varphi(x)\rangle =\langle \widehat F_q, \varphi \rangle,
 		\qquad
 		-1<\Re q<n-1.
 	\end{equation}
 	
 	For \(n-2<\Re q<n-1\), formula 	\eqref{eq:GYY}
 	shows that \(\widehat F_q\) is represented  by a 
 	continuous function on $\mathbb R^{n}\setminus \{0\}$ that is homogeneous of degree $-n+q+1$ . Hence \eqref{eq:Fq-second} becomes the
 	pointwise identity
 	\begin{align}\label{eq:Fq-GYY}
 		&(2\pi)^n\sin(\pi q)
 		\|-\theta\|_K^{-n+1+q}
 		\notag\\
 		&\quad=
 		\Gamma(n-1-q)
 		\cos\frac{\pi(n-1-q)}2
 		\int_{S^{n-1}}
 		|\langle\theta,u\rangle|^{-n+q+1}F_q(u)\,du
 		\notag\\
 		&\qquad
 		-i\Gamma(n-1-q)
 		\sin\frac{\pi(n-1-q)}2
 		\int_{S^{n-1}}
 		\operatorname{sgn}\langle\theta,u\rangle
 		|\langle\theta,u\rangle|^{-n+q+1}F_q(u)\,du.
 	\end{align}
 	For every fixed \(\theta\in S^{n-1}\), the two integrals in
 	\eqref{eq:Fq-GYY} are analytic in \(q\) in the half-plane
 	\(
 	\Re q>n-2.
 	\)
 	Indeed, \(\Re(q-n+1)>-1\), while \(F_q(u)\) is entire in \(q\)
 	and locally bounded uniformly in \(u\). It follows that the
 	right-hand side of \eqref{eq:Fq-GYY} is analytic in
 	\[
 	\Omega=
 	\{q\in\mathbb C:\Re q>n-2\}
 	\setminus\{n-1,n,n+1,\ldots\}.
 	\]
 	
 The left-hand side of \eqref{eq:Fq-GYY} is an entire function of the variable \(q\). Since \(\Omega\) is connected
 and the two sides agree for \(n-2<\Re q<n-1\), the identity theorem
 implies that \eqref{eq:Fq-GYY} holds for every \(q\in\Omega\).

 	Fix an integer $m\geq0$ and set
 	\[
 	r=m+\frac12,
 	\qquad
 	q=n-1+r.
 	\]
 	Since $q$ is a half-integer, none of the gamma factors in
 	\eqref{eq:Fq-GYY} has a pole, and $\sin(\pi q)\neq0$.
 	Substituting $q=n-1+r$ into \eqref{eq:Fq-GYY} gives
 	\begin{align}
 		\|-\theta\|_K^r
 		&=
 	\frac{	\Gamma(-r)}{(2\pi)^n\sin(\pi q)}\cos\frac{\pi r}{2}
 		\int_{S^{n-1}}
 		|\langle\theta,u\rangle|^rF_q(u)\,du
 		\notag\\
 		&\qquad
 		+\frac{	i\Gamma(-r)}{(2\pi)^n\sin(\pi q)}\sin\frac{\pi r}{2}
 		\int_{S^{n-1}}
 		\operatorname{sgn}\langle\theta,u\rangle
 		|\langle\theta,u\rangle|^rF_q(u)\,du.
 		\label{eq:half-integer}
 	\end{align}
 	
 	Since $r=m+\frac12$, both functions
 	\[
 	s\longmapsto |s|^r,
 	\qquad
 	s\longmapsto \operatorname{sgn}(s)|s|^r
 	\]
 	belong to $C^m(\mathbb R)$. Since $F_q$ is continuous on the sphere $S^{n-1}$,
 	differentiation under the integrals in \eqref{eq:half-integer} gives
 	\[
 	\|-\theta\|_K^r\in C^m(S^{n-1}).
 	\]
 	Hence 
 	\[
 	\|\cdot\|_K\in C^m(S^{n-1}).
 	\]
 	As $m$ is arbitrary,
 	\[
 	\|\cdot\|_K\in C^\infty(S^{n-1}),
 	\]
 	and therefore $K$ is infinitely smooth.
 \end{proof}

 \section{Applications to polynomially and separably integrable bodies}\label{Section-polynomial-separable}

Before we state our first theorem, let us recall the following fact proved in \cite[Theorem~3.7]{kmy}.

\begin{theorem}
\label{thm A Poly Ellipsoid}
    Let $n$ be an odd positive integer. Let $K$ be an infinitely smooth convex body
in $\mathbb{R}^n$, containing the origin in its interior. Suppose that there exists a number $N \geq n$ such that for every $\theta \in S^{n-1}$ and every $m \geq N$ we have
\begin{align*}
    \left.\frac{d^m}{dt^m}\right|_{t=0}A_{K,\theta} (t) = 0.
\end{align*}
Then $K$ is an ellipsoid.
\end{theorem}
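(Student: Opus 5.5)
The plan is to turn the vanishing of high-order derivatives into algebraic information about the Minkowski functional, using formula \eqref{FracDer-RY} at integer orders. Since $N\ge n$, every $k\ge N$ satisfies $k\ne n-1$, so \eqref{FracDer-RY} applies to the infinitely smooth body $K$. Write $u(x)=\|x\|_K$ and $v(x)=\|-x\|_K$.

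\emph{Step 1: Fourier transforms supported at the origin.} For even $k\ge N$ the sine term in \eqref{FracDer-RY} drops out, which is \eqref{RY}. For odd $k\ge N$ the cosine term drops out. The hypothesis $A^{(k)}_{K,\theta}(0)=0$ then says the following. For even $k$, the Fourier transform of $u^{k-n+1}+v^{k-n+1}$ vanishes on $S^{n-1}$. For odd $k$, the Fourier transform of $u^{k-n+1}-v^{k-n+1}$ vanishes on $S^{n-1}$. These transforms are homogeneous away from the origin, so they vanish on $\mathbb R^n\setminus\{0\}$. Hence, as distributions, they are supported at the origin. A tempered distribution whose Fourier transform is supported at $0$ is a polynomial.

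Now use that $n$ is odd. The exponent $p=k-n+1$ is even exactly when $k$ is even. Each function $u^p\pm v^p$ is continuous and homogeneous of the integer degree $p$. Therefore
\[
P_p:=u^p+v^p \ (p \text{ even}),\qquad Q_p:=u^p-v^p \ (p\text{ odd})
\]
are homogeneous polynomials of degree $p$, for all sufficiently large $p$ of the indicated parity. The local integrability and regularization issues at these integer exponents need to be checked. This should follow from the analytic continuation in $q$ used in the proof of Lemma~\ref{lem:smoothness-from-sections}.

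\emph{Step 2: algebraic elimination (expected main obstacle).} From these polynomial identities we want to show that $u$ and $-v$ are the two roots of
\[
z^2-\alpha(x)z-\beta(x)=0,
\]
where $\alpha$ is a linear form and $\beta$ is a quadratic form. Equivalently, we want $u-v=\alpha$ and $uv=\beta$. The idea is as follows.

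\begin{itemize}
\item Put $a=u^2$ and $b=v^2$. The power sums $a^j+b^j$ are polynomials for all large $j$. Newton-type identities with consecutive $j$ express $a+b$ and $ab$ as rational functions.
\item Then $a$ and $b$ are roots of $z^2-(a+b)z+ab$. The powers $a^j,b^j$ have polynomial power sums, so $a$ and $b$ are integral over the polynomial ring. Hence $a+b$ and $ab$ are polynomials, since the polynomial ring is integrally closed in its fraction field.
\item Next use the odd identities. Combining $u^{p}-v^{p}$ and $u^{p+2}-v^{p+2}$ with $a+b$ and $ab$, we extract $u-v$ and $uv$ as rational functions of degrees $1$ and $2$.
\item Finally, homogeneity, continuity on $S^{n-1}$, and integrality force $u-v$ and $uv$ to be polynomials.
\end{itemize}

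This bookkeeping, especially ruling out genuinely rational or degenerate cases, is the step I expect to be hardest. If the integrality route stalls, I would first try using the factorization of $u^p-v^p$ over $\mathbb C[x]$ for two coprime large exponents.

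\emph{Step 3: geometric conclusion.} Substitute $z=u$ into the quadratic relation and restrict to $\partial K=\{u=1\}$. Every boundary point then satisfies $1-\alpha(x)-\beta(x)=0$, so $\partial K$ lies in a quadric. A compact convex body with nonempty interior whose boundary lies in a quadric is an ellipsoid. Other quadrics are unbounded, or their bounded pieces cannot be the full boundary of a convex body. This proves that $K$ is an ellipsoid.
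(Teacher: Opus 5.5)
This statement is not proved in the paper; it is quoted from \cite[Theorem~3.7]{kmy}. Your outline follows essentially the same strategy as the proof there. Formula \eqref{FracDer-RY} turns the vanishing of $A^{(k)}_{K,\theta}(0)$ for $k\ge N$ into polynomiality of $\|x\|_K^p+(-1)^p\|-x\|_K^p$ for all large $p$. An algebraic lemma then shows that $\|x\|_K-\|-x\|_K$ is linear and $\|x\|_K\|-x\|_K$ is quadratic, so $\partial K$ lies in a quadric. No regularization issue arises in Step~1: for $k\ge N\ge n$ the exponent $k-n+1$ is positive, and \eqref{FracDer-RY} applies as stated.

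The one step that does not follow as written is the crux of Step~2: ``the powers $a^j,b^j$ have polynomial power sums, so $a$ and $b$ are integral over the polynomial ring.'' You know $s_j=a^j+b^j\in R=\mathbb R[x_1,\dots,x_n]$ only for \emph{large} $j$. So Newton's identities do not express $a+b$ and $ab$ polynomially in the $s_j$, and integrality needs an argument. Here is a short one.
\begin{enumerate}
\item The identity $s_js_{j+2}-s_{j+1}^2=(ab)^j(a-b)^2$ shows that $(ab)^j(a-b)^2\in R$ for all large $j$. Here $ab$ and $(a-b)^2\neq0$ are rational functions.
\item If a prime divided the denominator of $ab$, then $(ab)^j(a-b)^2$ would have negative valuation at that prime for large $j$. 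Hence $ab\in R$.
\item Now $a^j$ is a root of the monic polynomial $Z^2-s_jZ+(ab)^j\in R[Z]$. So $a$ and $b$ are integral over $R$, and $a+b\in R$ because $R$ is integrally closed.
\end{enumerate}

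It is also cleaner to do this once with $a=\|x\|_K$ and $b=-\|-x\|_K$, instead of in two stages. Then $a^p+b^p\in R$ for all large $p$ of both parities, and the lemma yields $\alpha=a+b$ and $\beta=-ab$ directly. Moreover, $(a-b)^2=(\|x\|_K+\|-x\|_K)^2$ never vanishes, so there is no degenerate case. In your version, the Hankel determinant vanishes identically exactly when $K=-K$, and that case must be handled separately.

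Finally, in Step~3, note that $\beta=\|x\|_K\|-x\|_K>0$ off the origin, so $\beta$ is positive definite. Hence each ray from $0$ meets $\{\beta+\alpha=1\}$ exactly once, and $K=\{\beta+\alpha\le1\}$ is an ellipsoid. No classification of quadrics is needed.
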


Theorem \ref{thm A Poly Ellipsoid}  was used in \cite{kmy} to show that the only polynomially integrable bodies in odd dimensions are ellipsoids.
The following theorem is an extension of this result to bodies with separable section functions near the origin. No smoothness assumption is needed, since Lemma \ref{lem:smoothness-from-sections} shows that local separability of the section function implies that \(K\) is infinitely smooth.
 \begin{theorem}
	Let \(n\) be odd, and let \(K\subset\mathbb{R}^n\) be a 
convex body containing the origin in its interior. If \(K\) has a locally
separable section function near the origin, then \(K\) is an ellipsoid.
\end{theorem}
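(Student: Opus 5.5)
The plan is to reduce to Theorem~\ref{thm A Poly Ellipsoid}: establish infinite smoothness through Lemma~\ref{lem:smoothness-from-sections}, then show that all high derivatives of $A_{K,\theta}$ at $0$ vanish.

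\emph{Step 1: extend the representation.} Suppose \eqref{def:separable} holds on $(-\varepsilon,\varepsilon)$, and shrink $\varepsilon$ so that $\varepsilon<a=\min\rho_K$. Relabel the indices $0,\dots,N$ as $1,\dots,N+1$ and apply Lemma~\ref{lem:continuation-separable}. This gives
\[
A_{K,\theta}(t)=\sum_{j=1}^{N'}\tilde a_j(\theta)\tilde b_j(t), \qquad t\in(-a,a),
\]
with $\tilde a_j$ continuous and linearly independent, and $\tilde b_j$ real analytic. The proof of that lemma gives more: $\tilde b=\mathcal A^{-1}(M_{H_1},\dots,M_{H_{N'}})^T$ for suitable spherical harmonics $H_j$. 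Since $n$ is odd, Lemma~\ref{mainLemma} says each $M_{H_j}$ is a polynomial on $(-a,a)$. Hence every $\tilde b_j$ is a polynomial, and $A_{K,\theta}(t)=\sum_{m=0}^{D}c_m(\theta)t^m$ on $(-a,a)$ for some $D$.

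Each $c_m$ is a finite linear combination of the $\tilde a_j$, so it is continuous on $S^{n-1}$. Consequently $A_{K,\theta}$ is $C^\infty$ in $t$ on $(-a,a)$, and
\[
A^{(k)}_{K,\theta}(t)=\sum_{m\ge k}\frac{m!}{(m-k)!}\,c_m(\theta)\,t^{m-k}
\]
is jointly continuous in $(\theta,t)$.

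\emph{Step 2: smoothness.} By Step~1, the hypotheses of Lemma~\ref{lem:smoothness-from-sections} hold. Therefore $K$ is infinitely smooth.

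\emph{Step 3: conclusion.} For every $\theta$ we have $A^{(m)}_{K,\theta}(0)=0$ for all $m>D$. Take $N_0=\max(D+1,n)$. Theorem~\ref{thm A Poly Ellipsoid} then applies, and $K$ is an ellipsoid.

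\emph{Main obstacle.} The delicate point is Step~1. The statement of Lemma~\ref{lem:continuation-separable} only guarantees analytic $\tilde b_j$. Polynomiality of the $\tilde b_j$ has to be extracted from the explicit formula in its proof, combined with the odd-dimensional part of Lemma~\ref{mainLemma}. One could try to bypass this by arguing directly: the functions $t\mapsto A_{K,\theta}(t)$ all lie in a finite-dimensional space spanned by the analytic functions $\tilde b_j$. But this alone does not force polynomiality. The odd-dimension polynomial property of the $M_H$ is genuinely needed.

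A second point to check is that the paper's definition of fractional derivatives makes Lemma~\ref{lem:smoothness-from-sections} applicable under only local smoothness on $(-\varepsilon,\varepsilon)$. This holds by that lemma's statement, so no further work is expected there.
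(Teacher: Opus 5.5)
Your proposal is correct and follows the paper's proof: you extend the representation to $(-a,a)$ via Lemma~\ref{lem:continuation-separable}, write the $b_m$ as $\mathcal A^{-1}(M_{H_1},\dots,M_{H_N})^T$ so that they are polynomials by the odd-dimensional case of Lemma~\ref{mainLemma}, obtain infinite smoothness from Lemma~\ref{lem:smoothness-from-sections}, and conclude with Theorem~\ref{thm A Poly Ellipsoid}. The only differences are bookkeeping: you explicitly check the joint-continuity hypothesis of Lemma~\ref{lem:smoothness-from-sections}, the shrinking $\varepsilon<a$, and the requirement $N\ge n$, all of which the paper leaves implicit.
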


\begin{proof}
As before, set
	\(
	a=\min_{\theta\in S^{n-1}}\rho_K(\theta).
	\)	
	By Lemma~\ref{lem:continuation-separable}, we may assume that
	\begin{equation}\label{eq:global-separation-odd}
		A_{K,\theta}(t)
		=
		\sum_{m=1}^N a_m(\theta)b_m(t),
		\qquad
		\theta\in S^{n-1},\quad -a<t<a,
		\end{equation}
where	\(a_1,\dots,a_N\) are linearly independent and continuous on $S^{n-1}$, and the   functions $b_m$ are real analytic on \((-a,a)\). Moreover, Lemma \ref{lem:smoothness-from-sections} implies that $K$ is infinitely smooth. 
	
	As in the proof of Lemma~\ref{lem:continuation-separable}, there exist
	spherical harmonics \(H_1,\dots,H_N\) such that the matrix
	\[
	\mathcal A
	=
	\left(
	\int_{S^{n-1}}a_m(\theta)H_j(\theta)\,d\theta
	\right)_{j,m=1}^N
	\]
	is invertible. For \(j=1,\dots,N\), define
	\[
	M_j(t)
	=
	\int_{S^{n-1}}A_{K,\theta}(t)H_j(\theta)\,d\theta.
	\]
	Using \eqref{eq:global-separation-odd}, we obtain
	\[
	M_j(t)
	=
	\sum_{m=1}^N
	b_m(t)
	\int_{S^{n-1}}a_m(\theta)H_j(\theta)\,d\theta.
	\]
	Equivalently,
	\[
	\begin{pmatrix}
		M_1(t)\\
		\vdots\\
		M_N(t)
	\end{pmatrix}
	=
	\mathcal A
	\begin{pmatrix}
		b_1(t)\\
		\vdots\\
		b_N(t)
	\end{pmatrix},
	\]
	and hence
	\begin{equation}\label{eq:b-from-moments}
		\begin{pmatrix}
			b_1(t)\\
			\vdots\\
			b_N(t)
		\end{pmatrix}
		=
		\mathcal A^{-1}
		\begin{pmatrix}
			M_1(t)\\
			\vdots\\
			M_N(t)
		\end{pmatrix}.
	\end{equation}
	
	Since \(n\) is odd, Lemma~\ref{mainLemma} implies that each \(M_j(t)\) is
	a polynomial in \(t\). It follows from \eqref{eq:b-from-moments} that each
	\(b_m(t)\) is also a polynomial. Since only finitely many functions are
	involved, there exists an integer \(L\) such that
	\[
	\deg b_m\le L,
	\qquad m=1,\dots,N.
	\]
	Therefore, by \eqref{eq:global-separation-odd}, for every
	\(\theta\in S^{n-1}\), the function \(A_{K,\theta}(t)\) is a polynomial
	in \(t\) of degree at most \(L\) on \((-a,a)\). Consequently,
	\[
	A_{K,\theta}^{(k)}(0)=0
	\qquad
	\text{for every }\theta\in S^{n-1}
	\text{ and every }k>L.
	\]
	
	By Theorem~\ref{thm A Poly Ellipsoid},  \(K\) is an ellipsoid.
\end{proof}

We will now turn to the case of even dimensions. First we treat the case of local separability near the boundary.

 \begin{theorem}
  	Let \(K\) be an infinitely smooth convex body in \(\mathbb{R}^n\), where
  	\(n\) is even. Suppose that there exists \(\varepsilon>0\) such that
  	\begin{equation}\label{Separable}
  		A_{K,\theta}(t)
  		=
  		\sum_{m=1}^N a_m(\theta)b_m(t)
  	\end{equation}
  	for every \(\theta\in S^{n-1}\) and every
  	\[
  	t\in\bigl(h_K(\theta)-\varepsilon,h_K(\theta)\bigr).
  	\]
  	Then \(K\) is a Euclidean ball centered at the origin.
  \end{theorem}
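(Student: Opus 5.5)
The plan is to show that \(h_K\) is constant on \(S^{n-1}\). This is enough, because \(h_K\equiv r\) means \(K=rB_2^n\). Suppose instead that \(h_K\) is not constant, so that \(\min h_K<\max h_K\). We will compare two local behaviours of the section function. By \cite[Lemma 2.4]{K-book}, \(A_{K,\theta}\) is \(C^\infty\) on the open interval \((-h_K(-\theta),h_K(\theta))\). On the other hand, at the endpoint \(t=h_K(\theta)\) it has a half-integer power singularity whenever the boundary point with outer normal \(\theta\) has positive Gauss curvature. Indeed, near such a point \(\partial K\) is approximated by its osculating paraboloid. This gives
\[
A_{K,\theta}(t)=C(\theta)\bigl(h_K(\theta)-t\bigr)^{\frac{n-1}{2}}(1+o(1)),\qquad t\to h_K(\theta)^-,
\]
with \(C(\theta)=c_n\,\kappa(\theta)^{-1/2}>0\), where \(\kappa(\theta)\) is the Gauss curvature. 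Since \(n\) is even, \(p=\frac{n-1}{2}\) is not an integer, so Lemma \ref{No Smooth Extension} says that \(A_{K,\theta}\) has no \(C^{k+1}\)-extension across \(h_K(\theta)\).

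To produce a contradiction, we will write such a singular \(A_{K,\theta_1}\) as a linear combination of section functions in other directions, all of which are smooth near \(\tau_0=h_K(\theta_1)\). Write \(a(\theta)=(a_1(\theta),\dots,a_N(\theta))\in\mathbb R^N\). For \(\tau\) set
\[
W(\tau)=\operatorname{span}\{a(\theta):h_K(\theta)>\tau\},\qquad d(\tau)=\dim W(\tau).
\]
Then \(W(\tau)\) decreases as \(\tau\) increases, and \(d\) is a non-increasing integer-valued function on \((\min h_K,\max h_K)\). Hence \(d\) is constant on some subinterval \([\tau_0,\tau_0+\eta)\), and by nestedness \(W(\tau_0)=W(\tau_0+\eta/2)\).

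Now pick \(\theta_1\) with \(h_K(\theta_1)=\tau_0\) which is a limit of directions \(\theta\) satisfying \(h_K(\theta)>\tau_0\), i.e.\ \(\theta_1\in\partial\{h_K>\tau_0\}\). By continuity of the \(a_m\) and closedness of the finite-dimensional space \(W(\tau_0)\), we get \(a(\theta_1)\in W(\tau_0)=W(\tau_0+\eta/2)\). So there are directions \(\theta^{(1)},\dots,\theta^{(r)}\) with \(h_K(\theta^{(j)})>\tau_0+\eta/2\) and constants \(c_j\) such that \(a(\theta_1)=\sum_j c_j a(\theta^{(j)})\).

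Plugging this into \eqref{Separable} on the common interval \((\tau_0-\varepsilon',\tau_0)\) gives
\[
A_{K,\theta_1}(t)=\sum_j c_j A_{K,\theta^{(j)}}(t),\qquad \varepsilon'=\min\{\varepsilon,\eta/2\}.
\]
This common interval works because \(h_K(\theta^{(j)})-\varepsilon<\tau_0-\varepsilon'\) fails only if \(h_K(\theta^{(j)})-\tau_0\) is too large. To guard against this, choose \(\eta<\varepsilon\) and restrict to directions with \(h_K(\theta)\in(\tau_0,\tau_0+\eta)\) in the definition of \(W\), which yields the same monotonicity argument. The right-hand side is \(C^\infty\) on a two-sided neighbourhood of \(\tau_0\), since \(\tau_0\) lies strictly inside each \((-h_K(-\theta^{(j)}),h_K(\theta^{(j)}))\). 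This contradicts Lemma \ref{No Smooth Extension} applied to \(A_{K,\theta_1}\), provided the curvature at the boundary point with normal \(\theta_1\) is positive.

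The main obstacle is securing that last proviso. We need \(\theta_1\) to lie on the level set \(\{h_K=\tau_0\}\), on the boundary of \(\{h_K>\tau_0\}\), and to be a normal at a point of positive Gauss curvature. The first thing to try is Sard's theorem for the Gauss map of the \(C^\infty\) hypersurface \(\partial K\). Normals of points with \(\kappa=0\) are critical values, so they form a null set, and the set \(G\) of normals all of whose boundary points have positive curvature is of full measure, hence dense. There is some freedom in \(\tau_0\) inside an interval of constancy of \(d\). Using it, we would choose \(\theta_1\in G\) with \(h_K(\theta_1)\) in the open interval of constancy. We would also require that \(\theta_1\) is not a local maximum of \(h_K\), and then take \(\tau_0=h_K(\theta_1)\). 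This is possible because \(G\) is dense and the strict local maxima values of \(h_K\) cannot fill an interval's worth of dense points.

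If this selection turns out delicate, a fallback is available. For any smooth convex body, near the endpoint one still has \(A_{K,\theta}(t)\ge c(h_K(\theta)-t)^{(n-1)/2}\) up to constants in the elliptic case. Alternatively, one can use the first nonvanishing half-integer-order term of the expansion to apply the same non-extension argument.
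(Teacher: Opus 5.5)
\noindent\textbf{A gap in the choice of $\theta_1$.}
Your core mechanism is the paper's. You use a dense set of directions where $A_{K,\theta}(t)\sim c(\theta)(h_K(\theta)-t)^{(n-1)/2}$; your Sard argument plays the role of \cite[Lemma~2.1]{akry}. You take a linear relation among the vectors $a(\theta)$ that writes one singular section function through section functions with strictly larger support values. You then apply Lemma~\ref{No Smooth Extension}.

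The gap is the step you flag yourself. You define $W(\tau)$ through the strict inequality $h_K(\theta)>\tau$. So you need $\theta_1\in G$ with $h_K(\theta_1)$ in the constancy interval $J$ \emph{and} $\theta_1$ not a local maximum of $h_K$. Your justification does not establish this, for two reasons.
\begin{itemize}
\item The maxima that matter are non-strict. $h_K$ is constant on an open set of directions whenever $\partial K$ contains a piece of a sphere centred at the origin. Such plateaus lie inside $G$ and consist of local maxima.
\item Density of $G$ cannot suffice on its own. A continuous function can be locally constant on an open dense set without being constant (Cantor-type functions).
\end{itemize}
The claim is true, but it needs more input. For instance, $G$ has full measure and $h_K$ is Lipschitz. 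Hence $h_K$ is differentiable with nonzero gradient on a positive-measure subset of $h_K^{-1}(J)$. Otherwise $h_K$ would be constant on each of the countably many components of $h_K^{-1}(J)$, yet it takes every value in $J$.

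Two smaller points are also unfinished.
\begin{itemize}
\item Your repair of the common interval is circular as written. The window $(\tau_0,\tau_0+\eta)$ depends on the $\tau_0$ that the monotonicity argument is supposed to produce. If instead the window slides with $\tau$, the spaces are no longer nested.
\item The claim $\tau_0>-h_K(-\theta^{(j)})$ is not justified. It needs the spread of support values to be smaller than the width, which the paper secures through $h_{N+1}-h_1<\inf_U w_K$.
\end{itemize}

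\noindent\textbf{How to avoid the obstacle.}
The whole obstacle is self-inflicted. Shrink $\varepsilon$ below $\min w_K$ and fix a cap $\beta\in(\min h_K,\max h_K]$ in advance. Set $W(\tau)=\operatorname{span}\{a(\theta):\tau\le h_K(\theta)<\beta\}$ for $\tau\in(\max\{\beta-\varepsilon,\min h_K\},\beta)$. These spaces are still nested, so they are constant on some open interval $J$. Now $a(\theta_1)\in W(h_K(\theta_1))$ holds trivially. Hence any $\theta_1\in G\cap h_K^{-1}(J)$ works, with no limiting argument and no local-maximum condition.

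The paper is more economical still. It takes $N+1$ good directions in a small neighbourhood with distinct support values $h_1<\dots<h_{N+1}$ and $h_{N+1}-h_1<\min\{\varepsilon,\inf_U w_K\}$. It takes a nontrivial linear relation among their coefficient vectors and the smallest index $j_*$ with nonzero coefficient. The singular function then automatically sits at the lowest support value involved. So only good directions with distinct support values are needed, and density gives these immediately.

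Finally, drop your fallback. At points of zero curvature a $C^\infty$ body need not have any half-integer term in its endpoint expansion (flat contact), and the argument never needs it.
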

  
  \begin{proof}
  	Since \(K\) is infinitely smooth, it follows from
  	\cite[Lemma~2.1]{akry} that there exists a dense subset
  	\(\Sigma\subset S^{n-1}\) such that, for every \(\theta\in\Sigma\),
  	\begin{equation}\label{eq:boundary-asymptotic}
  		A_{K,\theta}(t)
  		=
  		c(\theta)\bigl(h_K(\theta)-t\bigr)^{\frac{n-1}{2}}
  		\bigl(1+o(1)\bigr),
  		\qquad
  		t\to h_K(\theta)^-,
  	\end{equation}
  	where \(c(\theta)>0\).
  	
  	Suppose, to the contrary, that \(h_K\) is not constant on \(S^{n-1}\).
  	Then there exists
  	\(\theta_0\in S^{n-1}\) such that \(h_K\) is not constant on any
  	neighborhood of \(\theta_0\). 
  	
  	Set
  	\[
  	w_K(\theta)=h_K(\theta)+h_K(-\theta),
  	\]
  	the width of \(K\) in the direction \(\theta\). Since \(K\) has non-empty
  	interior, \(w_K(\theta_0)>0\). By continuity, we may choose a connected
  	open neighborhood \(U\) of \(\theta_0\) such that \(h_K\) is not
  	constant on \(U\) and
  	\begin{equation}\label{eq:small-oscillation}
  		\sup_{\theta\in U}h_K(\theta)
  		-
  		\inf_{\theta\in U}h_K(\theta)
  		<
  		\min\left\{\varepsilon,\inf_{\theta\in U}w_K(\theta)\right\}.
  	\end{equation}
  	Since \(U\) is connected and \(h_K\) is continuous and nonconstant on
  	\(U\), the set \(h_K(U)\) is a nondegenerate interval. Moreover,
  	\(\Sigma\cap U\) is dense in \(U\), and hence \(h_K(\Sigma\cap U)\) is
  	dense in \(h_K(U)\). We may therefore choose
  	\[
  	\theta_1,\dots,\theta_{N+1}\in\Sigma\cap U,
  	\]
  (recall that $N$ is the number of summands in \eqref{Separable}),
  	such that the numbers
  	\[
  	h_K(\theta_1),\dots,h_K(\theta_{N+1})
  	\]
  	are pairwise distinct. After relabelling, assume that
  	\[
  	h_K(\theta_1)
  	<
  	h_K(\theta_2)
  	<
  	\cdots
  	<
  	h_K(\theta_{N+1}).
  	\]
  	For brevity, write
  	\[
  	h_j=h_K(\theta_j),
  	\qquad j=1,\dots,N+1.
  	\]
  	By \eqref{eq:small-oscillation},
  	\begin{equation}\label{eq:support-spread}
  		h_{N+1}-h_1<\varepsilon.
  	\end{equation}
  	
  	The \(N+1\) vectors
  	\[
  	v_j
  	=
  	\begin{pmatrix}
  		a_1(\theta_j)\\
  		\vdots\\
  		a_N(\theta_j)
  	\end{pmatrix}
  	\in\mathbb{R}^N,
  	\qquad j=1,\dots,N+1,
  	\]
  	are linearly dependent. Hence there exist
  	\(\lambda_1,\dots,\lambda_{N+1}\in\mathbb{R}\), not all zero, such that
  	\[
  	\sum_{j=1}^{N+1}\lambda_jv_j=0.
  	\]
  	Equivalently,
  	\begin{equation}\label{eq:coefficient-relation}
  		\sum_{j=1}^{N+1}\lambda_j a_m(\theta_j)=0,
  		\qquad m=1,\dots,N.
  	\end{equation}
  	
  	Let
  	\(
  	j_*=\min\{j:\lambda_j\neq0\}.
  	\)
  	We claim that
  	\begin{equation}\label{eq:section-relation}
  		\sum_{j=j_*}^{N+1}\lambda_jA_{K,\theta_j}(t)=0
  	\end{equation}
  	for every
  	\(
  	t\in\bigl(h_{N+1}-\varepsilon,h_{j_*}\bigr).
  	\)
  	Indeed, if \(j\ge j_*\) and
  	\[
  	h_{N+1}-\varepsilon<t<h_{j_*},
  	\]
  	then
  	\[
  	t>h_{N+1}-\varepsilon\ge h_j-\varepsilon
  	\]
  	and
  	\[
  	t<h_{j_*}\le h_j.
  	\]
  	Thus
  	\[
  	t\in(h_j-\varepsilon,h_j),
  	\]
  	so \eqref{Separable} is valid for every \(j\ge j_*\). It follows from
  	\eqref{eq:coefficient-relation} that
  	\[
  	\begin{aligned}
  		\sum_{j=j_*}^{N+1}\lambda_jA_{K,\theta_j}(t)
  		&=
  		\sum_{j=j_*}^{N+1}\lambda_j
  		\sum_{m=1}^N a_m(\theta_j)b_m(t)\\
  		&=
  		\sum_{m=1}^N b_m(t)
  		\sum_{j=j_*}^{N+1}\lambda_j a_m(\theta_j)
  		=0.
  	\end{aligned}
  	\]
  	The interval in \eqref{eq:section-relation} is non-empty because
  	\[
  	h_{N+1}-h_{j_*}
  	\le h_{N+1}-h_1
  	<\varepsilon.
  	\]
  	
  	Since \(\lambda_{j_*}\neq0\), equation
  	\eqref{eq:section-relation} gives
  	\begin{equation}\label{eq:extension-relation}
  		A_{K,\theta_{j_*}}(t)
  		=
  		-\frac{1}{\lambda_{j_*}}
  		\sum_{j=j_*+1}^{N+1}
  		\lambda_jA_{K,\theta_j}(t),
  		\qquad
  		h_{N+1}-\varepsilon<t<h_{j_*}.
  	\end{equation}
  	
  	For every \(j>j_*\), we have
  	\[
  	0<h_j-h_{j_*}
  	\le h_{N+1}-h_1
  	<
  	\inf_{\theta\in U}w_K(\theta)
  	\le w_K(\theta_j).
  	\]
  	Consequently,
  	\[
  	-h_K(-\theta_j)
  	=
  	h_j-w_K(\theta_j)
  	<
  	h_{j_*}
  	<
  	h_j.
  	\]
  	Thus \(h_{j_*}\) lies in the interior of the support of
  	\(A_{K,\theta_j}\). Since \(K\) is infinitely smooth,
  	\(A_{K,\theta_j}\) is \(C^\infty\) in a neighbourhood of \(h_{j_*}\).
  	Therefore,
  	\[
  	g(t)
  	=
  	-\frac{1}{\lambda_{j_*}}
  	\sum_{j=j_*+1}^{N+1}
  	\lambda_jA_{K,\theta_j}(t)
  	\]
  	is \(C^\infty\) in a neighbourhood of \(h_{j_*}\).  
  	
  	By \eqref{eq:extension-relation}, \(g\) agrees with
  	\(A_{K,\theta_{j_*}}\) on a left neighbourhood of \(h_{j_*}\). On the
  	other hand, since \(\theta_{j_*}\in\Sigma\),
  	\eqref{eq:boundary-asymptotic} gives
  	\[
  	A_{K,\theta_{j_*}}(t)
  	=
  	c(\theta_{j_*})
  	\bigl(h_{j_*}-t\bigr)^{\frac{n-1}{2}}
  	\bigl(1+o(1)\bigr),
  	\qquad
  	t\to h_{j_*}^-.
  	\]
  	Because \(n\) is even, \((n-1)/2\) is not an integer. Lemma
  	\ref{No Smooth Extension} therefore implies that
  	\(A_{K,\theta_{j_*}}\) cannot agree on a left neighbourhood of
  	\(h_{j_*}\) with a \(C^\infty\) function defined across \(h_{j_*}\).
  	This contradicts the existence of \(g\).
  	
  	Hence \(h_K\) is constant on \(S^{n-1}\).
  	Therefore \(K\) is a Euclidean ball centered at the origin.
  \end{proof}

We will now prove an analogue of the previous theorem in the case of local separability near the origin. No smoothness of $K$ is required.

\begin{theorem}\label{thm:even-near-origin}
	Let $n$ be even, and let $K\subset\R^n$ be a
	convex body containing the origin in its interior. Suppose that $K$ has a locally separable  section function near the origin, i.e., there exists $\eps\in (0,\min_{\theta\in S^{n-1}}h_K(\theta))$ such that 
    \begin{equation*} 
		A_{K,\theta}(t)
		=
		\sum_{m=1}^N a_m(\theta)b_m(t),
		\qquad
		\theta\in S^{n-1},\quad -\eps<t<\eps.
		\end{equation*}
    
    Then $K$ is a Euclidean ball centered at the origin. 
	
\end{theorem}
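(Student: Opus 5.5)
The plan is to show that the derivatives $A_{K,\theta}^{(k)}(0)$, as functions of $\theta$, all lie in one fixed finite-dimensional space, and then to translate this, via formula \eqref{RY}, into a finite-dimensionality statement for powers of the Minkowski functional that forces $\|\cdot\|_K$ to be constant.

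\emph{Regularity.} First I apply Lemma~\ref{lem:continuation-separable}, with $(t_1,t_2)=(-\eps,\eps)$ shrunk if necessary so that it lies inside $(-a,a)$. This lets me assume that
\[
A_{K,\theta}(t)=\sum_{m=1}^N a_m(\theta)b_m(t), \qquad \theta\in S^{n-1},\ |t|<a,
\]
with $a_m$ continuous and linearly independent and $b_m$ real analytic on $(-a,a)$. Then $A^{(j)}_{K,\theta}(t)=\sum_m a_m(\theta)b_m^{(j)}(t)$ is jointly continuous on $S^{n-1}\times(-a,a)$ for every $j$. Lemma~\ref{lem:smoothness-from-sections} therefore shows that $K$ is infinitely smooth, so formula \eqref{RY} is available. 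In particular, for every even $k\ge 0$ the function $\theta\mapsto A^{(k)}_{K,\theta}(0)=\sum_m a_m(\theta)b_m^{(k)}(0)$ lies in the $N$-dimensional space $V=\operatorname{span}\{a_1,\dots,a_N\}$.

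\emph{Transfer to the Minkowski functional.} Since $n$ is even, $k\neq n-1$ for every even $k$. Put $g_k(x)=\|x\|_K^{k+1-n}+\|-x\|_K^{k+1-n}$. By \eqref{RY}, the restriction of $\widehat{g_k}$ to the sphere lies in $V$ for all even $k$. Now take even $k\ge n$, so that $j=k+1-n$ is a positive odd integer. Then $g_k$ is even, continuous, and homogeneous of degree $j$, and $\widehat{g_k}$ is homogeneous of degree $-n-j$.

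I claim that $g_k\mapsto \widehat{g_k}|_{S^{n-1}}$ is injective on the span $W$ of these $g_k$. Suppose a combination $g$ of the $g_k$ has Fourier transform vanishing on $\mathbb R^n\setminus\{0\}$. Then $\widehat g$ is supported at the origin, so $g$ is a polynomial. But $g$ is even, so it can only contain even-degree terms, while the $g_k$ are homogeneous of distinct odd degrees; hence $g=0$. The step I expect to need the most care is doing this cleanly for finite combinations of different odd degrees, by splitting a putative polynomial into homogeneous parts and comparing degrees. Granting it, the functions $\theta\mapsto x(\theta)^j+y(\theta)^j$, $j$ odd and positive, where $x(\theta)=\|\theta\|_K$ and $y(\theta)=\|-\theta\|_K$, span a space of dimension at most $N$.

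\emph{Concluding.} Suppose the unordered pair $\{x(\theta),y(\theta)\}$ took more than $N$ distinct values, realised at points $\theta_1,\dots,\theta_{N+1}$. The positive numbers occurring among the $x(\theta_i),y(\theta_i)$ are finitely many, say $c_1<\dots<c_r$. The power sums $c^j$ over odd $j$ separate distinct positive bases (a Vandermonde argument in $c^2$). Hence the vectors
\[
\bigl(x(\theta_i)^j+y(\theta_i)^j\bigr)_{j\ \mathrm{odd}}, \qquad i=1,\dots,N+1,
\]
are linearly independent, because distinct multisets of bases give independent exponential sequences. This contradicts the dimension bound. So the pair takes finitely many values. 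By continuity and connectedness of $S^{n-1}$ ($n\ge2$), $x(\theta)+y(\theta)$ and $x(\theta)y(\theta)$ are constant. Hence $x(\theta)$ takes at most two values, and again by continuity $\|\theta\|_K$ is constant. Thus $K$ is a Euclidean ball centered at the origin.
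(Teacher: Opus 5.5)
Your regularity step is fine and agrees with the paper. The transfer step, however, has a genuine gap, and it is the crux of your argument.

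The functions $g_k$, $k\ge n$ even, are positive on $\mathbb R^n\setminus\{0\}$ and homogeneous of pairwise distinct degrees $j_k=k+1-n$. Hence they are linearly independent, and their span is infinite-dimensional. The map $g\mapsto\widehat g|_{S^{n-1}}$ sends this span into the $N$-dimensional space $V$, so it can \emph{never} be injective. What your argument actually proves is injectivity of $g\mapsto\widehat g|_{\mathbb R^n\setminus\{0\}}$. But $\sum_k c_k\widehat{g_k}$ can vanish on $S^{n-1}$ without vanishing on $\mathbb R^n\setminus\{0\}$, because $\widehat{g_k}(r\theta)=r^{-n-j_k}\widehat{g_k}(\theta)$ with different exponents. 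Passing to restrictions does not help either: there is no single linear map $g_k|_{S^{n-1}}\mapsto\widehat{g_k}|_{S^{n-1}}$, since the Fourier transform of a homogeneous extension depends on its degree.

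The correct sphere-level consequence is the paper's Step~1, where your parity argument is used for one $k$ at a time. It gives $g_k(\theta)=\int_{S^{n-1}}|\langle\theta,u\rangle|^{j_k}w_k(u)\,du$, with $w_k$ in the fixed span of the even parts of $a_1,\dots,a_N$. This is a $k$-dependent integral operator applied to a fixed finite-dimensional space, and nothing forces the images into a common $N$-dimensional space. So the bound $\dim\operatorname{span}\{x^j+y^j\}\le N$ is unsupported, and the Vandermonde step has nothing to act on. Even granting that bound, distinct unordered pairs need not give independent sequences: $\{c_1,c_1\}$, $\{c_2,c_2\}$, $\{c_1,c_2\}$ give $2c_1^j$, $2c_2^j$, $c_1^j+c_2^j$. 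You would have to choose pairs that each introduce a new value.

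The missing idea is how to extract geometry from $g_k=\int|\langle\cdot,u\rangle|^{j_k}w_k(u)\,du$ when the exponent varies with $k$. The paper does this asymptotically, in the following chain:
\begin{enumerate}
\item Write $g_k=C_{j_k}S_{j_k}w_k$, where $S_p$ is the averaging operator with kernel $|\langle\theta,u\rangle|^p/C_p$.
\item Show $S_p\to\mathrm{id}$ uniformly on even continuous functions, using that $C_p$ decays only polynomially.
\item Deduce $\|S_pf\|_\infty\ge\frac12\|f\|_\infty$ uniformly on the finite-dimensional space for large $p$.
\item Use compactness to get $g_k/\|g_k\|_\infty\to z\neq0$.
\item Take $k$-th roots to obtain $\|\theta\|_K=1/a$ on an open set, where $a=\min\rho_K$.
\end{enumerate}
It then needs one more ingredient your plan does not use at all: real analyticity of $A_{K,\theta_0}$ on $(-a,a)$, from Lemma~\ref{lem:continuation-separable}. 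Together with $aB_2^n\subset K$ and the fact that $K$ coincides with $aB_2^n$ near $a\theta_0$, this forces $K=aB_2^n$.
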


\begin{proof} Recall that $a=\min_{\theta\in S^{n-1}}\rho_K(\theta)=\min_{\theta\in S^{n-1}}h_K(\theta).$
	By Lemma~\ref{lem:continuation-separable}, we may assume that
	\begin{equation}\label{eq:sep}
		A_{K,\theta}(t)
		=
		\sum_{m=1}^N a_m(\theta)b_m(t),
		\qquad
		\theta\in S^{n-1},\quad -a<t<a,
		\end{equation}
where	\(a_1,\dots,a_N\) are continuous and linearly independent, and the   functions $b_m$ are real analytic on \((-a,a)\). Moreover, by Lemma \ref{lem:smoothness-from-sections}, the body $K$ is infinitely smooth.

The proof is divided into five steps.

\medskip
\noindent
 
 \textbf{Step 1.} We  use Fourier analysis to show that there is a finite-dimensional subspace $W$ of  $ C_{\mathrm{even}}(S^{n-1})=\{f\in C(S^{n-1}):f(\theta)=f(-\theta)\}$, and functions $w_k\in W$ such that
 \begin{equation}\label{norm-vs-S_p}
 	\|\theta\|_K^{-n+k+1}+\|-\theta\|_K^{-n+k+1}
 	=\int_{S^{n-1}}|\langle\theta,u\rangle|^{-n+k+1}w_k(u)\,du,
 \end{equation}
 for all even $k\ge n$.

 Using formula \eqref{RY} with even $k\ge n$ and the  separability condition \eqref{eq:sep}, we obtain
 \begin{align}\label{eq:RY}
 	\Bigl(\|x\|_K^{-n+k+1}+\|-x\|_K^{-n+k+1}\Bigr)^{\wedge}(\theta)& =2\pi(n-1-k)(-1)^{k/2} A_{K,\theta}^{(k)}(0) \nonumber\\
 	&	=2\pi(n-1-k)(-1)^{k/2}
 	\sum_{m=1}^N a_m(\theta)b_m^{(k)}(0).
 \end{align}
 Since the left-hand side of \eqref{eq:RY} is an even function of $\theta$, in the right-hand side we can replace $a_m$ by $e_m$, where 
 $$ e_m(\theta)=\frac12\bigl(a_m(\theta)+a_m(-\theta)\bigr), \qquad m=1,\ldots,N.
 $$
 Extending both sides of \eqref{eq:RY} to $\mathbb R^n\setminus\{0\}$ as homogeneous functions of degree $-k-1$, we can interpret them as homogeneous distributions that agree on \(\mathbb R^n\setminus\{0\}\). Hence their difference \(D\) is
 supported at the origin.  
 Moreover, \(D\) is even and homogeneous of degree \(-k-1\).
 Since \(k-n+1\) is odd, every homogeneous distribution supported
 at the origin of degree
 \[
 -k-1=-n-(k-n+1)
 \]
 is a linear combination of derivatives of the delta distribution
 of total order \(k-n+1\), and is therefore odd. Thus \(D=0\).
 Consequently, \eqref{eq:RY} holds on
 \(\mathbb R^n\) in the sense of distributions. Applying the Fourier
 transform, we obtain
 
 \begin{multline*} 
 	(2\pi)^n	\Bigl(\|\theta\|_K^{-n+k+1}+\|-\theta\|_K^{-n+k+1}\Bigr) \\ 	=2\pi(n-1-k)(-1)^{k/2}
 	\sum_{m=1}^N \left(e_m(y/|y|)|y|^{-k-1}\right)^\wedge(\theta) b_m^{(k)}(0),
 \end{multline*}
 for $\theta\in S^{n-1}$.
 
 Formula \eqref{p-odd-negative} yields
 \begin{multline*} 
 	\|\theta\|_K^{-n+k+1}+\|-\theta\|_K^{-n+k+1}   \\	=	(2\pi)^{-n+1}(n-1-k) (-1)^{k/2} \frac{\pi (-1)^{(n-k-2)/2}}{2(-n+k+1)!}
 	\sum_{m=1}^N \int_{S^{n-1}} |\langle \theta, u\rangle|^{-n+k+1}  e_m(u) b_m^{(k)}(0)\, du.
 \end{multline*}

 Set
 \[
 W=\operatorname{span}\{e_1,\dots,e_N\}
 \subset C_{\mathrm{even}}(S^{n-1})
 \]
 and
 \[
 w_k(u)=(2\pi)^{-n+1}  \frac{\pi (-1)^{n/2}}{2(-n+k)!}\sum_{m=1}^N b_m^{(k)}(0)e_m(u).
 \]
 Then $w_k\in W$, and \eqref{norm-vs-S_p} follows.

\medskip
\noindent
\textbf{Step 2.} We now study the operators
$S_p:C_{\mathrm{even}}(S^{n-1})\to C_{\mathrm{even}}(S^{n-1})$, $p>0$, defined by 
$$ S_pf(\theta)=\frac{1}{C_p}\int_{S^{n-1}}|\langle\theta,u\rangle|^p f(u)\,du,
$$
where 
\[
 C_p=\int_{S^{n-1}}|\langle\theta,u\rangle|^p\,du.
\]
Our goal is to show that, for every even function \(f\in C(S^{n-1})\),
\[
S_p f\longrightarrow f
\qquad\text{uniformly on }S^{n-1}.
\]

By rotational invariance, $C_p$ is independent of $\theta$. Taking
$\theta=e_1$ and using spherical coordinates gives
\[
 \begin{aligned}
 C_p
 &=2|S^{n-2}|\int_0^1 t^p(1-t^2)^{(n-3)/2}\,dt\\
 &=|S^{n-2}|\int_0^1
 v^{(p-1)/2}(1-v)^{(n-3)/2}\,dv\\
 &=|S^{n-2}|B\left(\frac{p+1}{2},\frac{n-1}{2}\right)\\
  &=|S^{n-2}| \frac{\Gamma\left(\frac{p+1}{2} \right)\Gamma\left(\frac{n-1}{2} \right)}{\Gamma\left(\frac{p+n}{2} \right)} 
 \end{aligned}
\]
Using the asymptotic formula
\[
\frac{\Gamma(x+\alpha)}{\Gamma(x+\beta)}
\sim x^{\alpha-\beta}
\qquad\text{as }x\to\infty,
\]
for fixed real numbers \(\alpha,\beta\), which is a consequence of Stirling's formula, we obtain
\[
C_p
\sim
|S^{n-2}|
\Gamma\left(\frac{n-1}{2}\right)
\left(\frac{2}{p}\right)^{(n-1)/2}.
\]
Thus $C_p$ decays  polynomially as $p\to \infty$.

Let  \(f\in C(S^{n-1})\) be even. To show that
\[
S_p f\to f \text{  as } p\to \infty,
\text{  uniformly on }S^{n-1},
\] we proceed as follows.
Fix \(\eta>0\). Since \(f\) is uniformly continuous on \(S^{n-1}\), there
exists \(\delta\in(0,\sqrt{2})\) such that 
\[
|f(u)-f(v)|<\eta
\qquad\text{whenever }|u-v|<\delta.
\]
Now fix \(\theta\in S^{n-1}\), and let
\[
E_\delta(\theta)
=
\{u\in S^{n-1}:|u-\theta|<\delta\}
\cup
\{u\in S^{n-1}:|u+\theta|<\delta\}.
\]
Since $f$ is even, we have
\[
|f(u)-f(\theta)|<\eta
\qquad\text{for every }u\in E_\delta(\theta).
\]
If \(u\notin E_\delta(\theta)\),  then
\[
|\langle u,\theta\rangle|
\le q:=1-\frac{\delta^2}{2}<1.
\]

Therefore,
\[
\begin{aligned}
	|S_p f(\theta)-f(\theta)|
	&\le
	\frac{1}{C_p}
	\int_{E_\delta(\theta)}
	|\langle\theta,u\rangle|^p
	|f(u)-f(\theta)|\,du
+
	\frac{1}{C_p}
	\int_{S^{n-1}\setminus E_\delta(\theta)}
	|\langle\theta,u\rangle|^p
	|f(u)-f(\theta)|\,du
	\\
	&\le
	\eta
	+
	\frac{2\|f\|_\infty |S^{n-1}|q^p}{C_p}.
\end{aligned}
\]

The right-hand side is independent of \(\theta\). Since
$q^p$ decays exponentially while $C_p$ decays only polynomially, there exists \(p_0\) such that
\[
\frac{2\|f\|_\infty |S^{n-1}|q^p}{C_p}<\eta
\]
for every \(p\ge p_0\). Therefore, for every \(p\ge p_0\),
\[
|S_p f(\theta)-f(\theta)|<2\eta
\qquad\text{for all }\theta\in S^{n-1}.
\]
Hence
\[
\|S_p f-f\|_\infty\longrightarrow0.
\]

\medskip
\noindent
\textbf{Step 3.}  
 We claim that there exists \(p_\ast>0\),  such that for every
 \(p\ge p_\ast\) and every \(f\in W\),
 \begin{equation}\label{eq:uniform-lower-bound}
 	\|S_p f\|_\infty
 	\ge
 	\frac12\|f\|_\infty.
 \end{equation}

To prove \eqref{eq:uniform-lower-bound}, choose a basis
\[
\varphi_1,\dots,\varphi_d
\]
of \(W\). For
\[
f=\sum_{j=1}^d \alpha_j\varphi_j,
\]
define the coefficient norm
\[
\|f\|_{\mathrm{coef}}
=
\sum_{j=1}^d|\alpha_j|.
\]
Since \(W\) is finite-dimensional, the coefficient norm and the uniform norm
are equivalent. Hence there exists a constant \(M>0\), depending only on
the chosen basis of \(W\), such that
\[
\sum_{j=1}^d|\alpha_j|
\le
M\left\|
\sum_{j=1}^d\alpha_j\varphi_j
\right\|_\infty
=
M\|f\|_\infty
\]
for every \(f\in W\).

By linearity of \(S_p\),
\[
S_pf-f
=
\sum_{j=1}^d
\alpha_j\bigl(S_p\varphi_j-\varphi_j\bigr).
\]
Therefore,
\[
\begin{aligned}
	\|S_pf-f\|_\infty
	&\le
	\sum_{j=1}^d
	|\alpha_j|\,
	\|S_p\varphi_j-\varphi_j\|_\infty
	\le
	\left(\sum_{j=1}^d|\alpha_j|\right)
	\max_{1\le j\le d}
	\|S_p\varphi_j-\varphi_j\|_\infty
	\\
	&\le
	M\|f\|_\infty
	\max_{1\le j\le d}
	\|S_p\varphi_j-\varphi_j\|_\infty.
\end{aligned}
\]
For every \(j=1,\dots,d\),
\[
\|S_p\varphi_j-\varphi_j\|_\infty\longrightarrow0
\qquad\text{as }p\to\infty.
\]
Consequently, there exists \(p_\ast>0\),  such that
\[
M
\max_{1\le j\le d}
\|S_p\varphi_j-\varphi_j\|_\infty
\le \frac12
\]
for every \(p\ge p_\ast\). Hence
\[
\|S_pf-f\|_\infty
\le
\frac12\|f\|_\infty
\]
for every \(p\ge p_\ast\) and every \(f\in W\).

Finally, by the triangle inequality,
\[
\|S_pf\|_\infty
\ge
\|f\|_\infty-\|S_pf-f\|_\infty
\ge
\frac12\|f\|_\infty.
\]
This proves \eqref{eq:uniform-lower-bound}.

\medskip
\noindent
\textbf{Step 4.} We apply a limiting procedure as $k\to\infty$ to \eqref{norm-vs-S_p} to show that $\rho_K(\theta)=a$ on some open subset of the sphere. 

For  $k\ge n$, define
\[
 F_k(\theta)=\|\theta\|_K^{p_k}+\|-\theta\|_K^{p_k},
\]
where $p_k=-n+k+1$.

By \eqref{norm-vs-S_p}, for all even $k\ge n$, 
\begin{equation}\label{eq:Fk-operator}
 F_k=C_{p_k}S_{p_k}w_k,
 \qquad w_k\in W.
\end{equation}
Let
\[
 G_k=\|F_k\|_\infty.
\]
Since $\|\theta\|_K=1/\rho_K(\theta)\le1/a$ for all $\theta$, and equality
holds in at least one direction,
\[
 \left(\frac1a\right)^{p_k}
 \le G_k\le 2\left(\frac1a\right)^{p_k}.
\]
As $p_k/k\to1$, we obtain
\begin{equation}\label{eq:Gk-root}
 G_k^{1/k}\longrightarrow\frac1a.
\end{equation}

Define
\[
 z_k=\frac{C_{p_k}w_k}{G_k}\in W.
\]
Equation \eqref{eq:Fk-operator} gives
\begin{equation}\label{eq:normalized-Fk}
 \frac{F_k}{G_k}=S_{p_k}z_k.
\end{equation}
The left-hand side has uniform norm equal to one. By
\eqref{eq:uniform-lower-bound}, $\|z_k\|_\infty\le2$ for all sufficiently
large $k$. Since $W$ is finite-dimensional, after passing to a subsequence of
even integers we may assume that
\[
 z_k\longrightarrow z
 \qquad\text{uniformly on } S^{n-1}
\]
for some $z\in W$.

The operators $S_p$ are contractions on $C(S^{n-1})$:
\[
 \|S_pf\|_\infty\le\|f\|_\infty.
\]
Since $z$ is even and continuous, $S_{p_k}z\to z$ uniformly. Hence
\begin{align*}
 \|S_{p_k}z_k-z\|_\infty&\le\|S_{p_k}z_k-S_{p_k}z\|_\infty+\|S_{p_k}z-z\|_\infty\\ 
 &\le\|z_k-z\|_\infty+\|S_{p_k}z-z\|_\infty\longrightarrow0.
\end{align*}
Together with \eqref{eq:normalized-Fk}, this yields
\begin{equation}\label{eq:normalized-limit}
 \frac{F_k}{G_k}\longrightarrow z
 \qquad\text{uniformly on } S^{n-1}.
\end{equation}
Since $F_k/G_k\ge0$ and $\|F_k/G_k\|_\infty=1$, we have
$z\ge0$ and $\|z\|_\infty=1$. Thus
\[
 U_0=\{\theta\in S^{n-1}:z(\theta)>0\}
\]
is non-empty and open.

For $\theta\in U_0$, \eqref{eq:normalized-limit} gives
\[
 \left(\frac{F_k(\theta)}{G_k}\right)^{1/k}\longrightarrow1.
\]
By \eqref{eq:Gk-root},
\[
 F_k(\theta)^{1/k}\longrightarrow\frac1a.
\]
On the other hand,
\[
 \max\{\|\theta\|_K,\|-\theta\|_K\}^{p_k}
 \le F_k(\theta)
 \le2\max\{\|\theta\|_K,\|-\theta\|_K\}^{p_k},
\]
so, since $p_k/k\to1$,
\begin{equation*}\label{eq:max-equality}
 \max\{\|\theta\|_K,\|-\theta\|_K\}=\frac1a,
 \qquad \theta\in U_0.
\end{equation*}

We claim that there is an open set $U\subset S^{n-1}$, such that 
\[
\|\theta\|_K=\frac1a
\qquad\text{for every }\theta\in U.
\]
To this end, let
\[
E_+
=
\left\{
\theta\in S^{n-1}:\|\theta\|_K=\frac1a
\right\},
\qquad
E_-
=
\left\{
\theta\in S^{n-1}:\|-\theta\|_K=\frac1a
\right\}.
\]
We have proved that
\[
U_0\subset E_+\cup E_-.
\]
If \(U_0\subset E_+\), then \(\|\theta\|_K=1/a\) on the non-empty open
set \(U_0\). Otherwise,
\[
U_0\setminus E_+
=
U_0\cap\bigl(S^{n-1}\setminus E_+\bigr)
\]
is a non-empty open set, because \(E_+\) is closed. Moreover,
\[
U_0\setminus E_+\subset E_-.
\]
Consequently,
\[
\|-\theta\|_K=\frac1a
\qquad\text{for every }\theta\in U_0\setminus E_+.
\]
Replacing this set by its negative, we obtain a non-empty open set
\(U\subset S^{n-1}\) such that
\[
\|\theta\|_K=\frac1a
\qquad\text{for every }\theta\in U.
\]

\medskip
\noindent
\textbf{Step 5.} Here we show that $K$ is equal to the ball  $B=aB_2^n$. 
 
We have shown that $\rho_K(\theta) = a=\rho_B(\theta)$ for $\theta$ from some open set $U$.  Fix
$\theta_0\in U$. Since $K$ and $B$ coincide in the neighborhood of $a\theta_0$, there is $\delta>0$ such that 
\[
 A_{K,\theta_0}(t)
 =\kappa_{n-1}(a^2-t^2)^{(n-1)/2},
 \qquad a-\delta<t<a.
\]
Here, $\kappa_{n-1}= \mathrm{vol}_{n-1}(B_2^{n-1})$.

By Lemma~\ref{lem:continuation-separable}, $A_{K,\theta_0}$ is real analytic on
$(-a,a)$. The parallel section function of the ball $B$
\[
 \kappa_{n-1}(a^2-t^2)^{(n-1)/2}
\]
is also real analytic on $(-a,a)$. Therefore,
\begin{equation}\label{eq:all-section-volumes}
 A_{K,\theta_0}(t)
 =\kappa_{n-1}(a^2-t^2)^{(n-1)/2},
 \qquad -a<t<a.
\end{equation}

 By the definition of $a$, $B\subset K$. Thus, for every $|t|<a$,
\[
 B\cap\{x:\langle x,\theta_0\rangle=t\}
 \subset
 K\cap\{x:\langle x,\theta_0\rangle=t\}.
\]
By \eqref{eq:all-section-volumes}, these two   sections have the
same $(n-1)$-dimensional volume, so the inclusion is equality. Therefore
\begin{equation*}\label{eq:slab}
 K\cap\{x:|\langle x,\theta_0\rangle|<a\}
 =B\cap\{x:|\langle x,\theta_0\rangle|<a\}.
\end{equation*}
By convexity, there are no points of $K$ outside the slab $\{x:|\langle x,\theta_0\rangle|\le a\}$, and thus $K=B$.

\end{proof}

 \section{Applications to the homothety conjecture for floating bodies}\label{Section-floating}

 \begin{theorem}\label{thm:floating-body-homothety}
  	Let \(K\subset\mathbb{R}^n\), \(n\ge 3\), be an origin-symmetric infinitely
  	smooth convex body. Suppose that there exists
  	\[
  	0<\delta_0<\frac{\operatorname{vol}_n(K)}{2}
  	\]
  	such that \(K_\delta\) is homothetic to \(K\) for every
  	\[
  	\delta\in
  	\left(\delta_0,\frac{\operatorname{vol}_n(K)}{2}\right).
  	\]
  	Then \(K\) is an ellipsoid.
  \end{theorem}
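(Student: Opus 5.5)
The plan is to convert the homothety hypothesis into a scaling identity for the parallel section functions near $t=0$. That identity yields, for every even $k$, a relation between the Fourier transform of $\|x\|_K^{-n+k+1}$ and a power of $h_K$. From these relations we would then extract the ellipsoid.

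\textbf{Step 1 (cutting hyperplanes support $K_\delta$).} For an origin-symmetric $K$ and $\delta<\mathrm{vol}_n(K)/2$, I would use the known fact, going back to Meyer and Reisner, that every cutting hyperplane $\{\langle x,\theta\rangle=t(\theta)\}$ supports $K_\delta$. This gives $h_{K_\delta}(\theta)=t(\theta)$. Symmetry forces the homothety to be centered, so $K_\delta=\lambda(\delta)K$ and $t(\theta)=\lambda h_K(\theta)$. The map $\delta\mapsto\lambda$ is continuous and strictly monotone, and $\lambda\to0$ as $\delta\to\mathrm{vol}_n(K)/2$. Using symmetry, $\int_{-h_K}^{h_K}A_{K,\theta}=\mathrm{vol}_n(K)$ and $A_{K,\theta}$ is even, so
\[
\int_0^{\lambda h_K(\theta)}A_{K,\theta}(s)\,ds=\frac{\mathrm{vol}_n(K)}2-\delta=:g(\lambda),\qquad 0<\lambda<\lambda_0 .
\]
The substitution $s=uh_K(\theta)$ gives $\int_0^\lambda h_K(\theta)A_{K,\theta}(uh_K(\theta))\,du=g(\lambda)$. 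The left side is $C^\infty$ in $\lambda$ because $K$ is smooth. Hence $g$ is smooth, and differentiating in $\lambda$ gives
\[
A_{K,\theta}(t)=\frac{1}{h_K(\theta)}\,b\!\left(\frac{t}{h_K(\theta)}\right),\qquad |t|<\lambda_0\min h_K,
\]
with $b=g'$ independent of $\theta$.

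\textbf{Step 2 (derivative identities).} Differentiating $k$ times at $0$ gives $A^{(k)}_{K,\theta}(0)=b^{(k)}(0)\,h_K(\theta)^{-k-1}$. By \eqref{GKS}, for every even $k\neq n-1$,
\[
\bigl(\|x\|_K^{-n+k+1}\bigr)^\wedge(\theta)=c_k\,\|\theta\|_{K^\circ}^{-k-1},
\]
where the constants $c_k$ do not depend on $K$. Already $k=0$ says that $IK$ is a dilate of $K^\circ$. This holds for all even $k$ and also for fractional $q$ through \eqref{FracDer-RY}, since $A_{K,\theta}(t)$ is an exact dilation profile. The key point is that the scaling form makes the fractional derivatives $A^{(q)}_{K,\theta}(0)$ equal to $b^{(q)}(0)\,h_K(\theta)^{-q-1}$ for all $q$. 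Analytic continuation in $q$, exactly as in Lemma~\ref{lem:smoothness-from-sections}, then gives the identity
\[
\bigl(\|x\|_K^{-n+1+q}\bigr)^\wedge=\gamma(q)\,\|x\|_{K^\circ}^{-q-1}
\]
for all admissible $q$, with $\gamma$ independent of $K$.

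\textbf{Step 3 (identify the ellipsoid).} Compute $\gamma(q)$ by testing the identity on the Euclidean ball, for which both sides are explicit. Then choose $q$ with $-n+1+q=-n+2$ and apply Fourier inversion. This identity and the one for the complementary exponent $q'=n-2-q$ together give a self-dual relation between $K$ and $K^\circ$ under the Fourier transform. Next choose $q$ so that $-q-1=-n-2$, that is $q=n+1$; if $n+1$ is a pole, take a nearby half-integer and pass to the limit. The right side is then the Fourier transform of $\|x\|_K^{2}$ away from the origin. The identity for $q=n+1$ then says that $\|x\|_{K^\circ}^{-n-2}$ equals, up to a distribution supported at the origin, the Fourier transform of $\|x\|_K^2$. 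Inverting this gives $\|x\|_K^2=c\,(\|x\|_{K^\circ}^{-n-2})^\wedge$ off the origin. Now test this against the $q=-1+\epsilon$ relation, or compare homogeneity and even parts. I expect the derivation to force $\|x\|_K^2$ to be a quadratic form, namely the inverse Fourier transform of a combination of derivatives of $\delta$, so that $K$ is an ellipsoid.

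The hardest step is Step 3, namely turning the one-parameter family of Fourier identities into the statement that $\|\cdot\|_K^2$ is a polynomial. The case $k=0$ alone, $IK=cK^\circ$, is not known to characterize ellipsoids. I would therefore first try to exploit the freedom in $q$. Comparing the identity at $q$ and at $n-2-q$ via $\widehat{\widehat f\,}=(2\pi)^nf$ should give a functional equation in which the right-hand side for $q=n+1$ collapses to a distribution supported at the origin. If that fails, a fallback is the limiting argument of Step~4 in Theorem~\ref{thm:even-near-origin} as $k\to\infty$, applied now to $h_K^{-k-1}$, comparing the maximal directions of $\rho_K$ and of $h_K$.
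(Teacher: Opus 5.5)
\paragraph{The gap.} Your Step 1 is correct, and it is how the paper begins. Near $t=0$ the homothety gives $A_{K,\theta}(t)=h_K(\theta)^{-1}b(t/h_K(\theta))$, hence $A^{(k)}_{K,\theta}(0)=b^{(k)}(0)\,h_K(\theta)^{-k-1}$. The argument breaks down in Steps 2--3, for the following reasons.

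\begin{itemize}
\item The constants $c_k=\pi(n-1-k)(-1)^{k/2}b^{(k)}(0)$ are independent of $\theta$, but \emph{not} of $K$. The profile $b$ belongs to the unknown body $K$, and proving it is the ellipsoid profile is essentially the whole theorem. So ``computing $\gamma(q)$ on the ball'' is circular. Nothing in your argument gives $b^{(n+1)}(0)=0$, which is exactly what you need for $(\|x\|_K^{2})^\wedge$ to be supported at the origin.
\item The passage to non-integer $q$ is unjustified. For non-integer $q$, $A^{(q)}_{K,\theta}(0)$ depends on $A_{K,\theta}$ on its whole support. The scaling form holds only for $|t|<\lambda_0\min h_K$, because the homothety is assumed only for $\delta$ near $\mathrm{vol}_n(K)/2$. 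The even integers alone do not support an analytic continuation in $q$.
\item For even $n$ the plan has no mechanism at all. There $q=n+1$ is odd, and for origin-symmetric $K$ both sides of \eqref{FracDer-RY} vanish.
\item Your fallback as $k\to\infty$ does not help either. In even dimensions, invert the $k$-th identity via \eqref{p-odd-negative} and take $(k-n+1)$-th roots. At leading order this only reproduces the bipolar relation $\max_{u\in S^{n-1}}\langle\theta,u\rangle/h_K(u)=\|\theta\|_K$, which holds for every convex body containing the origin in its interior.
\end{itemize}

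\paragraph{What the paper does instead.} The missing ideas differ between the two parities.

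For odd $n$, the paper integrates over the sphere. By Lemma~\ref{mainLemma} with $H\equiv1$, $M(t)=\int_{S^{n-1}}A_{K,\theta}(t)\,d\theta$ is a polynomial. On the other hand $M^{(k)}(0)=b^{(k)}(0)\int_{S^{n-1}}h_K^{-k-1}(\theta)\,d\theta$, and the integral is positive. Hence $b^{(k)}(0)=0$ for all large $k$. This is the legitimate substitute for your ``test on the ball'', and Theorem~\ref{thm A Poly Ellipsoid} then gives the ellipsoid.

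For even $n\ge4$, only the two even orders $k=0$ and $k=n-2$ are used.
\begin{itemize}
\item The case $k=0$ gives $IK=c_1K^\circ$.
\item The case $k=n-2$ gives $(\|\cdot\|_K^{-1})^\wedge=c_2\|\cdot\|_{K^\circ}^{-n+1}$. After Fourier inversion and \eqref{GKS} applied to $K^\circ$, this becomes $I(K^\circ)=c_3K$.
\item Together these give $I^2K=cK$. The Milman--Shabelman--Yehudayoff theorem on fixed points of $I^2$ then shows that $K$ is an ellipsoid.
\end{itemize}
Your ``self-dual pair'' $q$, $n-2-q$ taken at $q=0$ is precisely this pair. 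So in even dimensions you were one observation away: recognize $I^2K=cK$ and invoke that theorem, rather than trying to force $\|x\|_K^2$ to be a quadratic form. For odd $n$ this route is unavailable, because $n-2$ is odd and \eqref{RY} does not apply. That is why the paper needs the separate polynomiality argument there.
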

  
  \begin{proof}
  	Since \(K\) is origin-symmetric, its floating body \(K_\delta\) is also
  	origin-symmetric. Therefore, any homothety between \(K\) and \(K_\delta\)
  	must be centered at the origin. Thus, for every
  	\[
  	\delta\in
  	\left(\delta_0,\frac{\operatorname{vol}_n(K)}{2}\right),
  	\]
  	there exists \(\lambda=\lambda(\delta)\in(0,1)\) such that
  	\[
  	K_\delta=\lambda K.
  	\]
  	
  	The hyperplane supporting \(K_\delta=\lambda K\) with outer normal
  	\(\theta\) is
  	\[
  	\{x\in\mathbb{R}^n:
  	\langle x,\theta\rangle=\lambda h_K(\theta)\}.
  	\]
This hyperplane cuts off from
  	\(K\) a cap of volume \(\delta\). Hence
  	\begin{equation}\label{eq:cap-volume-delta}
  		\int_{\lambda h_K(\theta)}^\infty
  		A_{K,\theta}(t)\,dt
  		=
  		\delta
  	\end{equation}
  	for every \(\theta\in S^{n-1}\).
  	
  	For a fixed \(\theta\), the function
  	\[
  	\lambda\longmapsto
  	\int_{\lambda h_K(\theta)}^\infty A_{K,\theta}(t)\,dt
  	\]
  	is continuous and strictly decreasing on \((0,1)\), with values
  	$
  	\frac{\operatorname{vol}_n(K)}{2}$
  	at $\lambda=0$, and $0$ at $\lambda=1.$
  	Consequently, instead of parametrizing the homothetic floating bodies by
  	\(\delta\), we may parametrize them by \(\lambda\). Thus there exists
  	\(\lambda_0\in(0,1)\) and a function \(f:(0,\lambda_0)\to\mathbb{R}\)
  	such that
  	\begin{equation}\label{eq:constant-cap-volume}
  		\int_{\lambda h_K(\theta)}^\infty
  		A_{K,\theta}(t)\,dt
  		=
  		f(\lambda)
  	\end{equation}
  	for every \(\theta\in S^{n-1}\) and every
  	\(\lambda\in(0,\lambda_0)\). In fact, \(f(\lambda)\) is the value of
  	\(\delta\) corresponding to the homothety \(K_\delta=\lambda K\). 
  	
 Fixing \(\theta\), the left-hand side of \eqref{eq:constant-cap-volume} defines a
\(C^\infty\) extension of \(f\) to a neighborhood of zero.  Differentiating both sides of \eqref{eq:constant-cap-volume},
 we obtain
  \begin{equation}\label{eq:f-first-derivative}
  	f'(\lambda)
  	=
  	-h_K(\theta)
  	A_{K,\theta}\bigl(\lambda h_K(\theta)\bigr).
  \end{equation}
  More generally, for every integer \(k\ge0\),
  \begin{equation}\label{eq:f-higher-derivatives}
  	f^{(k+1)}(\lambda)
  	=
  	-h_K^{k+1}(\theta)
  	A_{K,\theta}^{(k)}
  	\bigl(\lambda h_K(\theta)\bigr).
  \end{equation}
  	In particular,
  	\begin{equation}\label{eq:derivatives-at-zero}
  		A_{K,\theta}^{(k)}(0)
  		=
  		-\frac{f^{(k+1)}(0)}{h_K^{k+1}(\theta)}.
  	\end{equation}
  	
  	\medskip
  	\noindent
  	\emph{Suppose first that \(n\) is odd.}
  	Applying Lemma~\ref{mainLemma} to the constant spherical harmonic
  	\(H\equiv1\), we see that
  	\[
  	M(t)
  	=
  	\int_{S^{n-1}}A_{K,\theta}(t)\,d\theta
  	\]
  	is a polynomial in \(t\). Consequently, there exists $N>0$ such that
  	\[
  	M^{(k)}(0)=0
  	\qquad\text{for every }k\ge N.
  	\]
  	Using \eqref{eq:derivatives-at-zero}, we obtain
  	\[
  	\begin{aligned}
  		0
  		=
  		M^{(k)}(0)
  		&=
  		\int_{S^{n-1}}A_{K,\theta}^{(k)}(0)\,d\theta\\
  		&=
  		-f^{(k+1)}(0)
  		\int_{S^{n-1}}h_K^{-k-1}(\theta)\,d\theta.
  	\end{aligned}
  	\]
  	The last integral is strictly positive. Therefore,
  	\[
  	f^{(k+1)}(0)=0
  	\qquad\text{for every }k\ge N.
  	\]
  	It follows from \eqref{eq:derivatives-at-zero} that
  	\[
  	A_{K,\theta}^{(k)}(0)=0
  	\qquad
  	\text{for every }\theta\in S^{n-1}
  	\text{ and every }k\ge N.
  	\]
  	By Theorem~\ref{thm A Poly Ellipsoid}, \(K\) is an ellipsoid.
  	
  	\medskip
  	\noindent
  	\emph{Suppose now that \(n\ge4\) is even.}
  	Taking \(\lambda=0\) in \eqref{eq:f-first-derivative}, we obtain
  	\begin{equation}\label{eq:central-section}
  		A_{K,\theta}(0)
  		=
  		-\frac{f'(0)}{h_K(\theta)}.
  	\end{equation}
  	Since \(A_{K,\theta}(0)>0\), we have \(f'(0)<0\). Moreover,
  	\[
  	h_K(\theta)=\|\theta\|_{K^\circ},
  	\]
  	and therefore
  	\[
  	A_{K,\theta}(0)
  	=
  	-f'(0)\|\theta\|_{K^\circ}^{-1}.
  	\]
  	Recalling that
  	\[
  	\rho_{IK}(\theta)=A_{K,\theta}(0),
  	\]
  	we conclude that
  	\begin{equation}\label{eq:IK-polar}
  		IK=c_1K^\circ
  	\end{equation}
  	for some \(c_1>0\).
  	
  	Taking \(k=n-2\) in \eqref{eq:derivatives-at-zero} gives
  	\begin{equation}\label{eq:high-section-derivative}
  		A_{K,\theta}^{(n-2)}(0)
  		=
  		-f^{(n-1)}(0)h_K^{-n+1}(\theta)
  		=
  		-f^{(n-1)}(0)\|\theta\|_{K^\circ}^{-n+1}.
  	\end{equation}
  	By \eqref{GKS},
  	\[
  	\left(\|\cdot\|_K^{-1}\right)^\wedge(\theta)
  	=
  	(-1)^{\frac{n-2}{2}}\pi
  	A_{K,\theta}^{(n-2)}(0).
  	\]
  	Combining this with \eqref{eq:high-section-derivative}, we obtain
  	\begin{equation}\label{eq:fourier-second-relation}
  		\left(\|\cdot\|_K^{-1}\right)^\wedge(\theta)
  		=
  		c_2\|\theta\|_{K^\circ}^{-n+1},
  	\end{equation}
  	where
  	\[
  	c_2=(-1)^{n/2}\pi f^{(n-1)}(0).
  	\]
  	
  	Applying the Fourier transform to
  	\eqref{eq:fourier-second-relation} and using that both sides are even functions of $\theta$, we get
  	\[
  	(2\pi)^n\|\theta\|_K^{-1}
  	=
  	c_2
  	\left(\|\cdot\|_{K^\circ}^{-n+1}\right)^\wedge(\theta).
  	\]
  	Applying \eqref{GKS} to \(K^\circ\), we obtain
  	\[
  	\left(\|\cdot\|_{K^\circ}^{-n+1}\right)^\wedge(\theta)
  	=
  	\pi(n-1)A_{K^\circ,\theta}(0).
  	\]
  	Hence
  	\[
  	\|\theta\|_K^{-1}
  	=
  	\frac{c_2\pi(n-1)}{(2\pi)^n}
  	A_{K^\circ,\theta}(0).
  	\]
  	Both sides are positive, so \(c_2>0\). Since
  	\[
  	\rho_K(\theta)=\|\theta\|_K^{-1},
  	\qquad
  	\rho_{I(K^\circ)}(\theta)=A_{K^\circ,\theta}(0),
  	\]
  	there exists \(c_3>0\) such that
  	\begin{equation}\label{eq:polar-intersection}
  		I(K^\circ)=c_3K.
  	\end{equation}
  	
  	Combining \eqref{eq:IK-polar} and
  	\eqref{eq:polar-intersection}, and using the property
  	\[
  	I(cL)=c^{n-1}I(L),
  	\]
  	we obtain
  	\[
  	I^2K
  	=
  	I(c_1K^\circ)
  	=
  	c_1^{n-1}I(K^\circ)
  	=
  	cK
  	\]
  	for some \(c>0\). By \cite[Theorem~1.1]{msy}, \(K\) is an ellipsoid.
  \end{proof}

We will now treat the case of $\mathbb R^2$, where we can prove a stronger statement.

\begin{theorem}
	Let $K\subset\mathbb R^2$ be a convex body with
	$0\in\operatorname{int}K$. Assume that
	\[
	K_\delta=\lambda(\delta)K, \qquad \lambda(\delta)>0,
	\]
	for every $\delta$ in a non-empty open interval
	$(\delta_1,\delta_2)$. Then $K$ is an ellipse.
\end{theorem}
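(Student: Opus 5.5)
The plan is to turn the homothety condition into a statement about midpoints of chords, deduce that $\partial K$ is locally invariant under affine reflections fixing the origin, and conclude that $\partial K$ is a central conic.

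\textbf{Step 1 (the cap condition and regularity).} Since the homothety is centred at the origin, the condition $K_\delta=\lambda K$ says that for every $\theta\in S^1$ the line $\{\langle x,\theta\rangle=\lambda h_K(\theta)\}$ cuts off from $K$ a cap of area $\delta$. As in the proof of Theorem~\ref{thm:floating-body-homothety}, reparametrize by $\lambda$ on an interval $(\lambda_1,\lambda_2)\subset(0,1)$. This gives
\[
\int_{\lambda h_K(\theta)}^\infty A_{K,\theta}(t)\,dt=f(\lambda),\qquad \lambda\in(\lambda_1,\lambda_2),\ \theta\in S^1 .
\]
The left side is $C^1$ in $\lambda$, so $h_K(\theta)A_{K,\theta}(\lambda h_K(\theta))=-f'(\lambda)$.

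Next I will invoke the classical planar fact that a line cutting off a cap of fixed area from a convex body touches the floating body exactly at the midpoint of the chord. Since $K_\delta$ is homothetic to $K$, this forces $K$ to be strictly convex with $C^1$ boundary: non-strict convexity or corners of $K$ would propagate to $K_\delta=\lambda K$ in an incompatible way, and I would argue this via the midpoint description. Write $x(\theta)\in\partial K$ for the unique point with outer normal $\theta$. The point of $\partial(\lambda K)$ with normal $\theta$ is $\lambda x(\theta)$. So for every $\theta$, the midpoints of the chords $K\cap\{\langle x,\theta\rangle=t\}$, $t\in(\lambda_1h_K(\theta),\lambda_2h_K(\theta))$, all lie on the line $\ell_\theta=\mathbb R\,x(\theta)$ through the origin.

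\textbf{Step 2 (local affine reflections).} Fix $\theta$ and let $\sigma_\theta$ be the affine reflection fixing $\ell_\theta$ pointwise and reversing the direction $\theta^\perp$; it is a linear map of determinant $-1$ fixing $0$. Step 1 says exactly that $\sigma_\theta$ maps the portion $\partial K\cap\{\lambda_1h_K(\theta)<\langle x,\theta\rangle<\lambda_2h_K(\theta)\}$ onto itself, swapping its two arcs. Call this portion the band. Because $\lambda_1>0$, the band does not contain $x(\theta)$ itself, but it does contain two arcs of $\partial K$ on either side of $x(\theta)$.

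As $\theta$ varies over a small arc, the bands move continuously. Hence any given short arc $\Gamma\subset\partial K$ lies in the bands of a whole open set of directions $\theta$. Composing two such reflections $\sigma_{\theta}\sigma_{\theta'}$ gives linear maps of determinant $1$ that map a subarc of $\Gamma$ into $\Gamma$. Letting $\theta'\to\theta$, these maps generate a nontrivial one-parameter family of (germs of) linear maps preserving $\Gamma$ locally.

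\textbf{Step 3 (the local orbit is a conic).} A curve that is locally invariant under a one-parameter group of linear maps is locally an orbit of that group. Such orbits are pieces of central conics or of logarithmic-spiral-type curves. The invariance under the orientation-reversing reflections $\sigma_\theta$ excludes the spirals, and convexity together with $0\in\operatorname{int}K$ excludes hyperbolas. So every sufficiently short arc of $\partial K$ is an arc of an ellipse centred at the origin.

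Two such elliptic arcs that overlap on a subarc are arcs of the same ellipse, since a central conic is determined by any open arc. Covering the connected curve $\partial K$ by overlapping arcs then gives a single ellipse $E$ with $\partial K=\partial E$, so $K=E$.

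\textbf{Expected main obstacle.} I expect Steps 2–3 to be the hardest part. The issue is making rigorous the passage from the infinitesimal data, namely a continuous family of local affine reflections each defined only on a band avoiding $x(\theta)$, to an honest one-parameter group acting on a fixed arc, with only the low regularity available.

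To handle this, I would first try to bootstrap smoothness. The symmetry relations express one arc of $\partial K$ as the linear image of another, and together with the identity $h_KA_{K,\theta}(\lambda h_K)=-f'(\lambda)$ this should upgrade $\partial K$ to $C^2$. Then I would verify directly that the centro-affine curvature of $\partial K$ with respect to the origin is constant, since the maps $\sigma_\theta$ preserve it. Constant centro-affine curvature characterizes central conics, which would also finish the argument.
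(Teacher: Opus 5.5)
Your Step~1 agrees with the paper: by the Dupin property, the midpoints of the chords at levels $\lambda h_K(\theta)$ lie on $\mathbb R\,x(\theta)$. The regularity behind this, however, does not follow from the heuristic that corners or flat pieces ``would propagate incompatibly''. That $K$ is strictly convex with $C^1$ boundary when $K_\delta=\lambda K$ is the theorem of Werner and Ye \cite{WY}, which the paper cites. You need it before your very first sentence. Without $C^1$, a support line of $\lambda K$ at a corner need not be one of the lines cutting off area $\delta$. Without strict convexity, $x(\theta)$ is not well defined.

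The genuine gap is Step~2, as you suspected. Letting $\theta'\to\theta$ to produce an infinitesimal generator presupposes that $\theta\mapsto\sigma_\theta$, i.e.\ the direction of $x(\theta)$, is differentiable. That is second-order information a $C^1$ strictly convex curve need not have. Your proposed bootstrap to $C^2$ has no mechanism: each $\sigma_\theta$ only carries one arc onto another of the same regularity, and neither the chord-length identity nor the midpoint relation yields more derivatives than $\partial K$ already has. So the centro-affine curvature argument has nothing to act on. The paper avoids this by applying the local form of Soltan's theorem \cite{Soltan2011} directly to the midpoint condition.

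Your idea can still be completed without any smoothness, by replacing differentiation with a closed-subgroup argument.
\begin{itemize}
\item Fix $p_0\in\partial K$, a compact arc $\overline\Gamma\ni p_0$ in the band of some $\theta_0$, and $\theta_k\to\theta_0$ with $\theta_k\neq\theta_0$. By continuity of $\theta\mapsto\sigma_\theta$, for large $k$ the map $g_k=\sigma_{\theta_0}\sigma_{\theta_k}\in\mathrm{SL}(2,\mathbb R)$ sends $\overline\Gamma$ into $\partial K$, with $g_k\neq I$ and $g_k\to I$.
\item Write $g_k=\exp X_k$ with $\operatorname{tr}X_k=0$, and pass to a subsequence with $X_k/\|X_k\|\to X$.
\item Since $\|g_k^j-I\|\le e^{j\|X_k\|}-1$, for small $|t|$ an induction shows that $g_k^j$, $0\le j\le |t|/\|X_k\|$ (and likewise $g_k^{-j}$), maps a slightly smaller arc into $\overline\Gamma$. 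Letting $k\to\infty$ and using that $\partial K$ is closed, $\exp(tX)$ maps a neighbourhood of $p_0$ in $\partial K$ into $\partial K$.
\item As $\operatorname{tr}X=0$, the orbits are centred ellipses, centred hyperbola branches, or straight segments. So spirals never arise, and your appeal to $\sigma_\theta$ there is unnecessary.
\item $Xp_0\neq0$, since a nontrivial shear fixing $p_0$ cannot preserve an arc transversal to $\mathbb R p_0$. Segments are excluded by strict convexity, and hyperbola branches because the origin would lie on the wrong side of their tangent lines.
\end{itemize}
Thus near every point $\partial K$ is an arc of an ellipse centred at $0$, and your globalization finishes the proof. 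This repaired route is self-contained and uses the fact that all midpoint lines pass through the origin. The paper instead relies on Soltan's more general local result, which yields only some conic.
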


\begin{proof}
	By \cite{WY}, \(K\) is strictly convex and \(\partial K\) is of class
	\(C^1\). For \(\theta\in S^1\), let \(x(\theta)\) be the unique point of
	\(\partial K\) with outer normal \(\theta\), and let
	\(m(t,\theta)\) denote the midpoint of the chord
	$$
	K\cap\{x:\langle x,\theta\rangle=t\}.
	$$
	
	For \(\delta\in(\delta_1,\delta_2)\), the supporting line of
	\(K_\delta=\lambda(\delta)K\) with outer normal \(\theta\) is	
	$$
	\{x:\langle x,\theta\rangle=\lambda(\delta)h_K(\theta)\}.
	$$
	
	By the Dupin property of the floating body, the point of contact is the
	midpoint of the corresponding chord of \(K\). Hence	
	\begin{equation}\label{eqn1}
	m\bigl(\lambda(\delta)h_K(\theta),\theta\bigr)
	=
	\lambda(\delta)x(\theta).
	\end{equation}
	Since the area cut off by
	\(\{x:\langle x,\theta\rangle=t\}\) is continuous and strictly decreasing
	in \(t\), the function \(\lambda(\delta)\) is continuous and strictly
	decreasing. Thus	
	$$
	I:=\lambda((\delta_1,\delta_2))
	$$	
	is an open interval contained in \((0,1)\). From \eqref{eqn1}, for every
	\(r\in I\),	
	\begin{equation}\label{eqn2}
	m(rh_K(\theta),\theta)=r\,x(\theta).
	\end{equation}
	Thus, for each fixed \(\theta\), the midpoints of all chords in this open
	family lie on the line \(\mathbb R x(\theta)\).
	
We now use the local form of Soltan's result
\cite[Theorem~2]{Soltan2011}. His planar argument shows that if, in an
open family of chords near a fixed chord, the midpoints of the parallel
chords in each fixed direction lie on a line, then a neighborhood of
each endpoint of the fixed chord is contained in a conic. Therefore,
since the family of chords in \eqref{eqn2} is open, \eqref{eqn2}
implies that a neighborhood of each endpoint of every chord
\[
K\cap\{x:\langle x,\theta\rangle=rh_K(\theta)\},
\qquad r\in I,
\]
is contained in a conic.
	
	Observe that every point of \(\partial K\) is an
	endpoint of such a chord.
	Thus every point of \(\partial K\) has a neighborhood contained in a
	conic. Two conics containing a common nontrivial arc coincide, so by the
	connectedness of \(\partial K\) all these local conics are one and the
	same. Since \(\partial K\) is compact and strictly convex, this conic is
	an ellipse. Therefore \(K\) is an ellipse.
\end{proof}

 \section{Applications to the problem of Barker and Larman}\label{Section:BL}

The following theorem is a particular case of \cite[Theorem 7.2.18]{g}
\begin{theorem}
\label{thm p-chordal K=L}
    Let $K$ and $L$ be convex bodies in $\mathbb{R}^n$. If $p_1,p_2 \in \operatorname{int} K \cap \operatorname{int} L$ are distinct points such that 
   	\[
	A_{K-p_i,\theta}(0)=A_{L-p_i,\theta}(0),
	\qquad \theta\in S^{n-1},
	\]
    for  $i=1,2$, then $K=L$.
\end{theorem}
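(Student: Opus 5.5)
This is the classical result of Falconer and Gardner on equichordal-type determination: two convex bodies whose section volumes through two distinct interior points agree must coincide. I will work in polar coordinates about each point $p_i$. The condition says that the star bodies $K-p_i$ and $L-p_i$ have equal central $(n-1)$-sections in every direction. By the polar-coordinate formula, $A_{K-p_i,\theta}(0)=\frac{1}{n-1}\int_{S^{n-1}\cap\theta^\perp}\rho_{K-p_i}^{n-1}$. The spherical Radon transform kills only the odd part of its argument and is injective on even functions. Hence the hypothesis gives
\[
\rho_{K-p_i}^{n-1}(u)+\rho_{K-p_i}^{n-1}(-u)=\rho_{L-p_i}^{n-1}(u)+\rho_{L-p_i}^{n-1}(-u),\qquad u\in S^{n-1},\ i=1,2.
\]
Geometrically, for every line $\ell$ through $p_i$, the two segments into which $p_i$ divides the chords $K\cap\ell$ and $L\cap\ell$ have equal sums of $(n-1)$-st powers.

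The next step is to reduce to the line $\ell_0$ through $p_1,p_2$ and then propagate. Suppose $K\neq L$ and pick a boundary point of $K$ that is not in $L$, or vice versa. I would follow Falconer's argument. Parametrize chords through $p_1$ by a boundary point $x\in\partial K$. Let $x'$ be the other endpoint of the chord of $K$ through $x$ and $p_1$. Similarly let $y,y'$ be the endpoints on $\partial L$ of the same line.

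The relation $|x-p_1|^{n-1}+|x'-p_1|^{n-1}=|y-p_1|^{n-1}+|y'-p_1|^{n-1}$, together with monotonicity of $s\mapsto s^{n-1}$, shows that if $\rho_K>\rho_L$ in direction $u$ from $p_1$, then $\rho_K<\rho_L$ in direction $-u$ from $p_1$. Applying the same dichotomy alternately at $p_1$ and $p_2$ produces a sequence of boundary points $x_0,x_1,x_2,\dots$ of $K$. Each point is obtained by reflecting the previous one through $p_1$ or $p_2$ along chords, and at these points $K$ and $L$ alternately stick out beyond each other.

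The main obstacle is showing that this zigzag sequence forces a contradiction. I would try first to use convexity to show that the sequence converges to the endpoints of the chord of $K$ along $\ell_0$. Near $\ell_0$ the two boundaries are then forced to be tangent in a controlled way. Comparing the quantitative discrepancies $|\rho_K^{n-1}-\rho_L^{n-1}|$ along the sequence, and using the two identities, should show that these discrepancies are non-decreasing along the sequence. On the other hand, they must tend to the discrepancy on $\ell_0$ itself, and this last discrepancy vanishes.

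To see why it vanishes, note that along $\ell_0$ the two identities, written at $p_1$ and $p_2$, give two equations for the two endpoint positions of $L\cap\ell_0$. Since $s\mapsto s^{n-1}$ is strictly convex for $n\ge3$, these equations force $L\cap\ell_0=K\cap\ell_0$. The case $n=2$, where the relation is linear, needs a separate simple argument: equal chord lengths at two points.

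Alternatively, since the paper explicitly cites \cite[Theorem~7.2.18]{g}, I would simply invoke it: the statement here is exactly its special case of two points and the $(n-1)$-chord function. The sketch above records the underlying mechanism.
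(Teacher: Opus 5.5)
Your decisive step, invoking \cite[Theorem~7.2.18]{g}, is exactly the paper's route. The paper gives no argument for this statement beyond that citation. Your preliminary reduction is also correct, and it is the standard link to Falconer's chord-function theorem. By the polar-coordinate formula and injectivity of the spherical Radon transform on even functions, equal central sections of $K-p_i$ and $L-p_i$ is equivalent to equality of the $(n-1)$-chord functions $\rho_{K-p_i}^{n-1}(u)+\rho_{K-p_i}^{n-1}(-u)$ and $\rho_{L-p_i}^{n-1}(u)+\rho_{L-p_i}^{n-1}(-u)$.

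The self-contained mechanism you sketch, however, is not a proof, and two of its steps fail as stated.

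(i) For $n=2$, the identities at $p_1$ and at $p_2$, restricted to $\ell_0$, are the same single equation $|K\cap\ell_0|=|L\cap\ell_0|$. This says nothing about where the chord sits. There is no ``simple argument'' on $\ell_0$; the planar case is the full two-point X-ray theorem. For $n\ge3$ your claim about $\ell_0$ is fine. Let $\alpha,\beta$ be the distances from $p_1$ to the two endpoints and $s=|p_1-p_2|$. Then $(s+\alpha)^{n-1}+(\beta-s)^{n-1}$ is strictly monotone along the level curves of $\alpha^{n-1}+\beta^{n-1}$, which forces $K\cap\ell_0=L\cap\ell_0$.

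(ii) The assertion that the discrepancies are non-decreasing along the zigzag is the entire difficulty, and you give no reason for it. The identity at $p_1$ transfers the discrepancy along a line through $p_1$ exactly to the opposite end of that line. Nothing, however, relates it to the discrepancy along the line through $p_2$ at that end, since the two are measured about different centres along different rays. Convexity only makes them comparable up to constant factors. So a priori the discrepancies could decay geometrically while the sequence approaches $\ell_0$, and this is precisely the case $K\cap\ell_0=L\cap\ell_0$ that must be excluded.

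When the chords on $\ell_0$ differ, which is possible only for $n=2$, no monotonicity is needed. Running the zigzag from the end where one body protrudes toward the other end and passing to the limit already gives a contradiction. The coincident case is where the real work in Falconer's theorem lies. As written, your proposal is complete only through the citation.
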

 
In \cite{BL}, Barker and Larman posed the following question. Let
\(t>0\) be fixed, and let \(K\) and \(L\) be convex bodies in
\(\mathbb{R}^n\) containing \(tB_2^n\) in their interiors. Does
\[
A_{K,\theta}(t)=A_{L,\theta}(t),
\qquad \theta\in S^{n-1},
\]
imply that \(K=L\)?

This problem remains open. Barker and Larman proved that, when \(n\)
is odd, equality of the parallel section functions for infinitely many
values of \(t\) implies that \(K=L\). They also asked whether the
corresponding result holds in even dimensions. The following theorem
answers this question affirmatively.
 
 \begin{theorem}Let \(K\) and \(L\) be convex bodies in \(\mathbb{R}^n\), \(n\geq 2\),
	containing the origin in their interiors, and set
	\[
	a=
	\min_{u\in S^{n-1}}
	\min\{\rho_K(u),\rho_L(u)\}.
	\]
	Let \(T\subset(-a,a)\) be an infinite set having an accumulation point
	in \((-a,a)\). Suppose that, for every \(t\in T\),
	\[
	A_{K,\theta}(t)=A_{L,\theta}(t),
	\qquad \theta\in S^{n-1}.
	\]
	Then \(K=L\).
 	\end{theorem}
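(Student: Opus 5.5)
We will reduce the statement to Theorem~\ref{thm p-chordal K=L}, using the analyticity given by Lemma~\ref{mainLemma} to upgrade equality on $T$ to equality on all of $(-a,a)$.

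\textbf{Step 1: from $T$ to the whole interval.} Fix a spherical harmonic $H$. By Lemma~\ref{mainLemma}, applied to $K$ and to $L$, the functions $M^K_H(t)=\int_{S^{n-1}}A_{K,\theta}(t)H(\theta)\,d\theta$ and $M^L_H(t)$ are real analytic on $(-a_K,a_K)$ and $(-a_L,a_L)$ respectively. Both intervals contain $(-a,a)$. The difference $M^K_H-M^L_H$ is therefore analytic on $(-a,a)$. It vanishes on $T$, which accumulates inside $(-a,a)$, so it vanishes identically there. For fixed $t\in(-a,a)$, the continuous function $\theta\mapsto A_{K,\theta}(t)-A_{L,\theta}(t)$ is orthogonal to every spherical harmonic. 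By completeness it is zero, so
\[
A_{K,\theta}(t)=A_{L,\theta}(t),\qquad \theta\in S^{n-1},\ t\in(-a,a).
\]

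\textbf{Step 2: translate to chordal data at two points.} Pick two distinct points $p_1=0$ and $p_2=s\xi$ with $\xi\in S^{n-1}$ and $0<s<a$. Both lie in $\operatorname{int}K\cap\operatorname{int}L$, since $aB_2^n\subset K\cap L$. We have
\[
A_{K-p_2,\theta}(0)=\operatorname{vol}_{n-1}\bigl(K\cap\{\langle x,\theta\rangle=s\langle\xi,\theta\rangle\}\bigr)=A_{K,\theta}(s\langle\xi,\theta\rangle),
\]
and $|s\langle\xi,\theta\rangle|\le s<a$. Step~1 then gives $A_{K-p_i,\theta}(0)=A_{L-p_i,\theta}(0)$ for all $\theta$ and $i=1,2$. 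Theorem~\ref{thm p-chordal K=L} yields $K=L$.

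\textbf{Main obstacle.} The substantive step is Step~1, and in particular checking that Lemma~\ref{mainLemma} applies on the common interval. Its analyticity interval for $K$ is $(-\min\rho_K,\min\rho_K)$, which contains $(-a,a)$, and likewise for $L$. The completeness argument then needs continuity of $A_{K,\theta}(t)$ in $\theta$ for $|t|<a$. This holds because the hyperplane $\{\langle x,\theta\rangle=t\}$ meets the interior of $K$, so the section volume varies continuously with $\theta$.
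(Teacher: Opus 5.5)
Your proposal is correct and follows the paper's proof essentially step for step. Lemma~\ref{mainLemma}, the identity theorem and completeness of spherical harmonics give $A_{K,\theta}(t)=A_{L,\theta}(t)$ for all $|t|<a$, and the translation identity $A_{K-p,\theta}(0)=A_{K,\theta}(\langle p,\theta\rangle)$ with $p_1=0$, $p_2=s\xi$ then reduces everything to Theorem~\ref{thm p-chordal K=L}; your remarks on the analyticity intervals $(-\min\rho_K,\min\rho_K)\supset(-a,a)$ and on continuity in $\theta$ only make explicit what the paper uses implicitly.
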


\begin{proof}
	Let $H$ be a spherical harmonic. By
	Lemma~\ref{mainLemma}, the functions
	\[
	t\longmapsto
	\int_{S^{n-1}}A_{K,\theta}(t)H(\theta)\,d\theta
	\]
	and
	\[
	t\longmapsto
	\int_{S^{n-1}}A_{L,\theta}(t)H(\theta)\,d\theta
	\]
	are real analytic on \((-a,a)\).
	
	By the hypothesis, these two functions agree for every \(t\in T\).
	Since \(T\) has an accumulation point in \((-a,a)\), the identity
	theorem for real-analytic functions gives
	\[
	\int_{S^{n-1}}A_{K,\theta}(t)H(\theta)\,d\theta
	=
	\int_{S^{n-1}}A_{L,\theta}(t)H(\theta)\,d\theta
	\]
	for every \(t\in(-a,a)\).
	
	Thus, for every fixed \(t\in(-a,a)\), the function
	\[
	\theta\longmapsto A_{K,\theta}(t)-A_{L,\theta}(t)
	\]
	is orthogonal to every spherical harmonic. By the completeness of the
	spherical harmonics in \(L^2(S^{n-1})\), it follows that
	\[
	A_{K,\theta}(t)=A_{L,\theta}(t)
	\]
	for almost every \(\theta\in S^{n-1}\). Since the parallel section
	functions are continuous in \(\theta\), we conclude that
	\begin{equation}\label{eq:all-t-all-theta}
		A_{K,\theta}(t)=A_{L,\theta}(t),
		\qquad
		t\in(-a,a),\quad \theta\in S^{n-1}.
	\end{equation}
	
	Choose \(\xi\in S^{n-1}\) and a nonzero \(t_0\in(-a,a)\), and set
	\[
	p_1=0,
	\qquad
	p_2=t_0\xi.
	\]
	By the definition of \(a\),
	\[
	aB_2^n\subset K\cap L,
	\]
	and hence
	\[
	p_1,p_2\in\operatorname{int}K\cap\operatorname{int}L.
	\]
	
	For \(p_1=0\), equation \eqref{eq:all-t-all-theta} gives
	\[
	A_{K-p_1,\theta}(0)
	=
	A_{K,\theta}(0)
	=
	A_{L,\theta}(0)
	=
	A_{L-p_1,\theta}(0)
	\]
	for every \(\theta\in S^{n-1}\).
	
	For \(p_2=t_0\xi\), translating the section hyperplane gives
	\[
	A_{K-p_2,\theta}(0)
	=
	A_{K,\theta}(\langle p_2,\theta\rangle)
	=
	A_{K,\theta}(t_0\langle\xi,\theta\rangle).
	\]
	Since
	\[
	\left|t_0\langle\xi,\theta\rangle\right|
	\leq |t_0|<a,
	\]
	equation \eqref{eq:all-t-all-theta} implies that
$$
		A_{K-p_2,\theta}(0)
	=
		A_{K,\theta}(t_0\langle\xi,\theta\rangle) 
		=
		A_{L,\theta}(t_0\langle\xi,\theta\rangle)
		=
		A_{L-p_2,\theta}(0)
$$
	for every \(\theta\in S^{n-1}\).
	
	The hypotheses of Theorem~\ref{thm p-chordal K=L} are therefore
	satisfied for the two distinct points \(p_1\) and \(p_2\). Consequently,
	\(K=L\).
\end{proof}

\textbf{AI Disclaimer.} The authors used ChatGPT for proof checking and improving the exposition.
    
    \textbf{Acknowledgment.} This project was initiated during visits by D. Ryabogin and V. Yaskin to the University of Sydney in 2024 and 2025. They are grateful to SMRI and the University of Sydney for their hospitality and financial support.


\begin{thebibliography}{99} 

\bibitem{A}
M.~Agranovsky, \emph{On polynomially integrable domains in Euclidean spaces},
Trends in Mathematics; Complex Analysis and Dynamical Systems. New Trends and Open Problems, Birkh\"auser (2018), pp. 1--21.

\bibitem{A2}
M.~Agranovsky,
\emph{Locally polynomially integrable surfaces and finite stationary phase expansions},
J. Anal. Math. \textbf{141} (2020), no. 1, 23--47.

\bibitem{akry}
M. Agranovsky, A. Koldobsky, D. Ryabogin, V. Yaskin, \emph{An analogue of polynomially integrable bodies in even-dimensional spaces}, J. Math. Anal. Appl. \textbf{529} (2024), no. 2, Paper No. 127071, 12 pp.

\bibitem{ANRSY}
M.~Alfonseca, F.~Nazarov, D.~Ryabogin, A.~Stancu, V.~Yaskin, 
\emph{Two results on the homothety conjecture for convex bodies of flotation on the plane}, 
Rev. Un. Mat. Argentina \textbf{69} (2026), no. 1, 93--108.

\bibitem{Ar}
V.~I.~Arnold, \emph{Arnold’s problems}, 2nd edition, Springer-Verlag, Berlin (2004).

\bibitem{BarL} I.~B\'ar\'any and D.~G.~Larman, \emph{Convex bodies, economic cap coverings, random polytopes},
Mathematika, \textbf{35 (2)} (1988), 274--291.

\bibitem{BL} J.~A.~Barker, D.~G.~Larman, \emph{Determination of convex bodies by certain sets of sectional volumes}, Discrete Mathematics \textbf{241} (2001), 79--96.

\bibitem{g}
R. Gardner, \emph{Geometric Tomography}, second ed., Cambridge University Press, New York, 2006.

\bibitem{GKS}
R.~J.~Gardner, A.~Koldobsky, T.~Schlumprecht, \emph{An analytic solution to the
Busemann-Petty problem on sections of convex bodies}, Annals of Math. \textbf{149}
(1999), 691--703.



\bibitem{GYY}
P.~Goodey, V.~Yaskin,  M.~Yaskina, \emph{Fourier transforms and the Funk-Hecke theorem in convex geometry}, J. Lond. Math. Soc. (2) \textbf{80} (2009), no. 2, 388–404.

\bibitem{Gro}
H.~Groemer, \emph{Geometric Applications of Fourier Series and Spherical Harmonics}, Cambridge
University Press, New York, 1996.


\bibitem{H}
J.~Haddad, \emph{An approximate counterexample to the Barker–Larman problem in dimension 4},
preprint, arXiv:2609.10257.

\bibitem{HR}
 J.~Haddad, D.~Ryabogin, \emph{On convex bodies with constant non-central sections}, preprint, arXiv:2605.00299.



\bibitem{K-book}
A. Koldobsky, \emph{Fourier Analysis in Convex Geometry}, American Mathematical Society, Providence RI, 2005.

\bibitem{kmy}
A.~Koldobsky, A.~S.~Merkurjev, V.~Yaskin, \emph{On polynomially integrable convex bodies}, Adv. Math. \textbf{320} (2017) 876–886.


  \bibitem{L}
  {E.~Lutwak}, \textit{Intersection bodies and dual mixed volumes}, Adv. Math. {\bf 71} (1988),
  232--261.

\bibitem{msy}
E.~Milman, S.~Shabelman, A.~Yehudayoff, \emph{Fixed and periodic points of the intersection body operator}, Invent. Math. \textbf{241} (2025), no. 2, 509--558.



\bibitem{RY}
D.~Ryabogin and V.~Yaskin, \emph{Detecting symmetry in star bodies}, J. Math. Anal. Appl. \textbf{395} (2012),
509--514.

\bibitem{S} L.~A.~Santal\'o, \emph{Two characteristic properties of circles on a spherical surface} (Spanish), Math. Notae
\textbf{11} (1951), 73--78.

\bibitem{SW} C.~Sch\"utt, E.~Werner, \emph{The convex floating body}, Math. Scand. \textbf{66} (1990), 275--290.

\bibitem{SW2} C.~Sch\"utt, E.~Werner, \emph{Homothetic floating bodies}, Geom. Dedicata, \textbf{49 (3)} (1994), 335--348.


\bibitem{Soltan2011}
V.~Soltan,
\emph{Convex solids with hyperplanar midsurfaces for restricted families of chords},
Bul. Acad. \c{S}tiin\c{t}e Repub. Moldova Mat.
\textbf{2011}, no.~2 (66), 23--40.


\bibitem{St} 
A.~Stancu,  
\emph{The floating body problem},
Bull. London Math. Soc. \textbf{38} (2006), no. 5, 839--846.

\bibitem{WY} 
E.~Werner, D.~Ye,  
\emph{On the homothety conjecture}, 
Indiana Univ. Math. J. \textbf{60} (2011), no. 1, 1--20.

\bibitem{XMC} 
Ge Xiong, Yan-wei Ma, Wing-sum Cheung, \emph{Determination of convex bodies from
$\Gamma$-section functions}, J. Shanghai Univ. \textbf{12} (2008), no. 3, 200--203.

\bibitem{Y}  V.~Yaskin, 
\emph{Unique determination of convex polytopes by non-central sections}, 
Math. Ann. \textbf{349} (2011), no. 3, 647--655.

\bibitem{yz}
V.~Yaskin and B.~Zawalski,  \emph{On separably integrable symmetric convex bodies}, Adv. Math. \textbf{441} (2024), Paper No. 109527, 24 pp.

\bibitem{YZh}
V.~Yaskin, N.~Zhang,  
\emph{Non-central sections of convex bodies},
Israel J. Math. \textbf{220} (2017), no. 2, 763--790.


\end{thebibliography}
\end{document}